\title[Some relational structures and their associated algebras]
{Some relational structures with polynomial growth and their
  associated algebras II.\\
  Finite generation.}
\author{Maurice Pouzet}
\thanks{This work was done under the auspices of the Intas programme
  Universal algebra and Lattice theory, and supported by CMCU
  Franco-Tunisien "Outils math\'ematiques pour l'informatique". Some
  of the results (invariants of permutation groupoids) were announced
  in an extended abstract for
  FPSAC'05~\cite{Pouzet_Thiery.AgeAlgebra.2005}. An early short
  version of this paper was presented at the ROGICS'08 conference, May
  12-17, Mahdia (Tunisia)~\cite{Pouzet_Thiery.AgeAlgebra2alpha}. The
  authors would like to express their gratitude to the organizers of
  the latter, Professors Y.~Boudabbous and N.~Zaguia.}
\address{Univ. Lyon, Universit\'e Claude-Bernard Lyon1, CNRS UMR 5208, Institut Camille Jordan,  43 bd. 11 Novembre 1918, 69622 Villeurbanne Cedex, France and Mathematics \& Statistics Department, University of Calgary, Calgary, Alberta, Canada T2N 1N4}
\email{pouzet@univ-lyon1.fr, maurice.pouzet@ucalgary.ca }
\author{Nicolas M. Thi\'ery}
\address{Univ Paris-Sud, Laboratoire de Recherche en Informatique,
  Orsay, F-91405; CNRS, Orsay, F-91405}
\email{Nicolas.Thiery@u-psud.fr}
\newcommand{\TODO}[2][To do: ]{\textcolor{red}{\textbf{#1#2}}}
\newcommand{\TODO}[2][]{}
\newcommand{\suchthat}{{\ :\ }}
\newcommand{\leaf}{\circ}
\newcommand{\sg}{{\mathfrak S}}
\newcommand{\bijection}{\hookrightarrow\!\!\!\!\!\rightarrow}
\newcommand{\Id}{{\operatorname{Id}}}
\newcommand{\kernel}{{\operatorname{kernel}}}
\newcommand{\dom}{{\operatorname{dom}}}
\newcommand{\im}{{\operatorname{im}}}
\newcommand{\id}{{\operatorname{id}}}
\newcommand{\loc}{{\operatorname{Loc}}}
\newcommand{\qsym}{{\operatorname{QSym}}}
\newcommand{\sym}{{\operatorname{Sym}}}
\newcommand{\lm}{{\operatorname{lm}}}
\newcommand{\hilbert}{{\mathcal H}}
\newcommand{\reynolds}{\mathcal R}
\newcommand{\age}{{\mathcal A}}
\newcommand{\agealgebra}{{\K.\mathcal A}}
\newcommand{\profile}{\varphi}
\newcommand{\poset}{{\mathcal P}}
\newcommand{\lex}{{\operatorname{lex}}}
\newcommand{\setalgebra}[1][E]{{\K^{[#1]^{<\omega}}}}
\newcommand{\dvect}{{\mathbf{d}}}
\newcommand{\x}{{X}}
\newcommand{\R}{\mathbb{R}}
\newcommand{\K}{\mathbb{K}}
\newcommand{\N}{\mathbb{N}}
\newcommand{\Z}{\mathbb{Z}}
\newcommand{\Q}{\mathbb{Q}}
\newcommand{\type}[1]{\tau(#1)}
\newcommand{\ideal}[1][I]{\mathcal #1}
\newtheorem{theorem}{Theorem}[section]
\newtheorem{lemma}[theorem]{Lemma}
\newtheorem{proposition}[theorem]{Proposition} 
\newtheorem{corollary}[theorem]{Corollary} 
\theoremstyle{definition}
\newtheorem{definition}[theorem]{Definition}
\newtheorem{problem}[theorem]{Problem}
\newtheorem{fact}[theorem]{Fact}
\theoremstyle{remark}
\newtheorem{remark}[theorem]{Remark}
\newtheorem{remarks}[theorem]{Remarks}
\newtheorem{conditions}[theorem]{Conditions}
\newtheorem{condition}[theorem]{Condition}
\newtheorem{example}[theorem]{Example}
\newtheorem{examples}[theorem]{Examples}
\def\centerpicture #1 by #2 (#3){\leavevmode
        \vbox to #2{
        \hrule width #1 height 0pt depth 0pt
        \vfill
        \special{pictfile #3}}}
\newskip\@bigflushglue \@bigflushglue = -100pt plus 1fil
\def\bigcentering{\let\\\@centercr\rightskip\@bigflushglue%
\leftskip\@bigflushglue
\parindent\z@\parfillskip\z@skip}
\begin{document}
\maketitle

\begin{abstract}
  The \emph{profile} of a relational structure $R$ is the function
  $\profile_R$ which counts for every integer $n$ the number, possibly
  infinite, $\profile_R(n)$ of substructures of $R$ induced on the
  $n$-element subsets, isomorphic substructures being identified.  If
  $\profile_R$ takes only finite values, this is the Hilbert function
  of a graded algebra associated with $R$, the \emph{age algebra}
  $\agealgebra(R)$, introduced by P.~J.~Cameron.

  In a previous paper,
  we studied the relationship between the properties of a relational
  structure and those of their algebra, particularly when the
  relational structure $R$ admits a finite monomorphic
  decomposition. This setting still encompasses well-studied graded
  commutative algebras like invariant rings of finite permutation
  groups, or the rings of quasi-symmetric polynomials.

  In this paper,
  we investigate how far the well know algebraic properties of those
  rings extend to age algebras. The main result is a combinatorial
  characterization of when the age algebra is finitely generated. In the
  special case of tournaments, we show that the age algebra is
  finitely generated if and only if the profile is bounded.
  We explore the Cohen-Macaulay property in the special case of
  invariants of permutation groupoids. Finally, we exhibit sufficient
  conditions on the relational structure that make naturally the age
  algebra into a Hopf algebra.

  \medskip \noindent {\bf Keywords:} Relational structure, profile,
  polynomial growth, age, age algebra, graded commutative algebra,
  Hilbert function, invariants of permutation groups, quasi-symmetric
  polynomials,
\end{abstract}

\section*{Introduction}

In~\cite[Section 2]{Cameron.1981} Cameron defined the \emph{orbit
  algebra} of a permutation group $G$ acting on an infinite set $E$;
by design, the Hilbert function $h_{\agealgebra(G)}$ of this graded
commutative algebra coincides with the \emph{orbital profile} of $G$,
namely the function that counts, for every $n$, the number
$\profile_G(n)$ of orbits of $G$ acting the finite subsets of size $n$
of $E$. The main motivation was to study properties of orbital
profiles, and in particular a phenomenon of jumps in the possible
growth rates.

Similar phenomenon had been observed in the more general context of
relational structures (permutation groups being in correspondence with
homogeneous relational structures). There, the profile of a relational
structure $R$ on $E$ counts, for every integer $n$, the number
$\profile_R(n)$ of substructures of $R$ induced on the $n$-element
subsets of $E$, isomorphic substructures being identified. In
\cite{Cameron.1997}, Cameron proposed to generalize the approach,
defining the age algebra of a relational structure. Familiar algebras
like invariant rings of finite permutation groups, algebras of
quasi-symmetric polynomials~\cite{Gessel.QSym.1984} or the shuffle
algebra over a finite alphabet can be realized as such age algebras.

As a follow up to \cite{Pouzet_Thiery.AgeAlgebra1}, this paper
investigates relationships between combinatorial properties of a
relational structure $R$ and algebraic properties of its age algebra
$\agealgebra(R)$. Specifically, we consider the following conditions:
\begin{conditions}
  \label{conditions}
   \begin{enumerate}
   \item[(BP)] the profile is bounded above by a polynomial;
     \label{condition.bounded_polynomial}
   \item[(QP)] the profile is eventually a quasi-polynomial;
     equivalently its generating series is of the form:
     \begin{equation*}
       \frac{P(Z)}{(1-Z^{n_1})(1-Z^{n_2})\cdots(1-Z^{n_k})}\ ,
     \end{equation*}
     where $n_1\leq \cdots \leq n_k$ and $P(Z)\in\Z[Z]$;
     \label{condition.quasipolynomial}
   \item[(QPP)] same as (QP) with $P\in \N[Z]$;
     \label{condition.positivequasipolynomial}
   \item[(AP)] the profile is asymptotically equivalent to a
     polynomial;
     \label{condition.assymptoticallypolynomial}
   \item[(FG)] the age algebra is finitely generated;
     \label{condition.finitely_generated}
   \item[(CM)] the age algebra is Cohen-Macaulay;
     \label{condition.cohen_macaulay}
  \end{enumerate}
\end{conditions}
We also consider the following condition:
\begin{condition}
  \label{condition.hopf}
  \begin{enumerate}
  \item[(H)] the age algebra is a graded Hopf algebra; in particular
    it is free.
  \end{enumerate}
\end{condition}

Let us review what is known, starting with the obvious or well know
relations between those conditions:
\begin{itemize}
\item (QP) $\Longrightarrow$ (BP), (AP) $\Longrightarrow$ (BP);
\item (QP) $\Longrightarrow$ (AP), using that the profile is non
  decreasing (Pouzet, \cite[ex. 8 p. 113]{Fraisse.CLM1.1971} for
  relational structures; Cameron, \cite[Theorem 2.2]{Cameron.1976} for
  permutation groups);
\item The two conditions of (QP) are equivalent as a straightforward
  consequence of~\cite[Proposition
  4.4.1]{Stanley.1999.EnumerativeCombinatorics1};
\item (FG) $\Longrightarrow$ (QP) by a general property of
  graded commutative algebras (see e.g.~\cite[Chapter 9, \S 2]{Cox_al.IVA});
\item (CM) $\Longrightarrow$ (QPP);
\item (CM) $\Longrightarrow$ (FG).
\end{itemize}

For the examples mentioned earlier, all of Conditions~\ref{conditions} are
equivalent. This is not an isolated phenomenon. Let us consider the case
of a permutation group, or equivalently of an homogeneous relational
structure. Cameron conjectured (BP)
$\Longrightarrow$ (AP)~\cite[p.~69]{Cameron.1990} and Macpherson asked
whether (BP) $\Longrightarrow$ (AP)~\cite[p. 286]{Macpherson.1985}.
Justine Falque and the second author recently provided a positive
answer; in fact, all of Conditions~\ref{conditions} are
equivalent~\cite{FalqueThiery.Macpherson.FPSAC}.

In \cite{Pouzet_Thiery.AgeAlgebra1}, we conjectured that, when the
kernel of $R$ is finite, the profile $\varphi_R$ of a relational
structure $R$ is eventually a quasi-polynomial whenever $\varphi_R$ is
bounded by some polynomial (that is (BP) $\Longrightarrow$ (AP)). We
then introduced the notion of monomorphic decomposition, restricted
ourselves to the case of relational structure admitting a finite
monomorphic decomposition and proved our conjecture there.

In this paper, we proceed in the same setting, and investigate the
conditions (FG), (CM), and (H). This setting encompasses invariant
rings of finite permutation groups and rings of quasi-symmetric
polynomials for which conditions (FG) and (CM) hold; the later fact is
a theorem of Garsia and Wallach~\cite{Garsia_Wallach.2003}. For other
examples, (FG) and (CM) fail. Our main result is a combinatorial
characterization of when (FG) holds
(Theorem~\ref{theorem.finiteGeneration}).

In Section~\ref{section.ageAlgebra}, we briefly review relational
structures, their orbit algebras and monomorphic decompositions. We
refer to \cite{Pouzet_Thiery.AgeAlgebra1} for a detailed approach. In
addition, we mention there a relationship between the order properties
of an age and properties of the ideals of the age algebra.

Section~\ref{section.finiteGeneration} is devoted to our main
theorem (Theorem~\ref{theorem.finiteGeneration}). We start by giving
the key ideas on an example, and proceed with the general proof.
With the help of~\cite{Boudabbous_Pouzet.2009},
we show that the age algebra of a tournament is finitely generated if
and only if the profile of the tournament is bounded
(Theorem~\ref{theorem.finiteGenerationTournament}). Indeed, if an age
algebra is finitely generated, the profile is bounded above by a
polynomial and according to ~\cite{Boudabbous_Pouzet.2009},
tournaments with profile bounded by a polynomial have a finite
monomorphic decomposition (meaning simply that these tournaments are
lexicographical sums of acyclic tournaments indexed by a finite
tournament) and our characterization applies.

In Section~\ref{section.invariant.groupoids}, we further restrict the
setting to \emph{invariant rings of permutation groupoids}, defined as
age algebras of some appropriate relational structures
(Section~\ref{section.invariant.groupoids}). This setting provides a
very tight generalization of invariant rings of permutation groups
which still includes quasi-symmetric polynomials. We analyze in
details which properties of invariant rings of permutation groups
carry over -- or not -- to permutation groupoids (cf.
Propositions~\ref{proposition.groupoid.derivation}
and~\ref{proposition.groupoids.reynolds}, and
Theorems~\ref{theorem.finiteGeneration} and~\ref{theorem.SAGBI}). To
this end, we use in particular techniques
from~\cite{Garsia_Stanton.1984}.

Finally, in Section~\ref{section.hopf}, we give some sufficient
conditions on the relational structure to endow the age algebra with a
further structure of (coassociative) Hopf algebra, and recover several
classical Hopf algebras. The age algebra is then a free algebra, which
imposes a very rigid form for the Hilbert series and thus for the
profile.

We conclude this introduction with general comments and perspectives.

For orbit algebras, our main theorem does not bring new insight;
indeed, by Theorem~\ref{theorem.orbit_algebra_finite_monomorphic}, an
orbit algebra whose homogeneous structure admits a finite monomorphic
decomposition is isomorphic to the invariant rings of a finite
permutation group (or straightforward quotient thereof); the latter is
well known to be finitely generated.

There are other classes of structures for which polynomially bounded
profile amounts to the existence of a finite monomorphic decomposition
(e.g. permutations~\cite{Monteil_Pouzet.2008} and ordered
graphs~\cite{Balogh1,Balogh3}) and for which we may use our
characterization.

\begin{problem}
  For relational structures admitting a finite monomorphic
  decomposition, characterize combinatorially when the age algebra is
  Cohen-Macaulay. This would provide an alternate proof of Garsia and
  Wallach's theorem for quasi-symmetric functions.
\end{problem}

\begin{problem}
  For general relational structures, characterize combinatorially when
  the age algebra is finitely generated. The remaining open case is
  when the minimal monomorphic decomposition has infinitely many
  blocs, a finite number of which being infinite. Examples~A.9
  and~A.10 of~\cite{Pouzet_Thiery.AgeAlgebra1} show that, in this case
  and even just for graphs, the age algebra can be finitely generated,
  or not.
\end{problem}

Ultimately the notion of age algebra may just not be quite right, and
should be adapted to ensure that all of Conditions~\ref{conditions} are equivalent:
\begin{problem}
  Devise some \emph{sensible} alternative graded algebra structure on
  $\agealgebra(R)$ which is finitely generated whenever the profile
  is bounded above by a polynomial.
\end{problem}
\begin{problem}
  Devise some \emph{sensible} alternative graded algebra structure on
  $\agealgebra(R)$ which is Cohen-Macaulay whenever the Hilbert series
  has the appropriate form (by Proposition~4
  of~\cite{Bogvad_Meyer.2003} such an algebra always exists).
\end{problem}

Let $R$ be a relational structure on a set $E$. It induces an
equivalence relation on the finite subsets of $E$ by setting
$A\sim_R B$ whenever the structures induced by $R$ on $A$ and $B$
respectively are isomorphic. This equivalence relation is
\emph{hereditary}:
\begin{displaymath}
  |A|=|B| \qquad \text{ and } \qquad
  |\{X\subset A\suchthat X\sim_R C\}| = |\{X\subset B\suchthat X\sim_R C\}\,|
\end{displaymath}
whenever $A, B$, and $C$ are finite subsets of $E$ such that
$A\sim_R B$ (hereditary equivalence relations were introduced
in~\cite{Pouzet_Rosenberg.1986}; see
also~\cite{Buchwalder.2009.Thesis}. The definition of age algebra
extends straightforwardly to hereditary equivalence relations.
\begin{problem}
  Generalize the results of this paper to hereditary equivalence
  relations and the corresponding  algebras.
\end{problem}

\section{On the profile  and age algebra of a relational structure}
\label{section.ageAlgebra}

\subsection{Relational structures and their monomorphic decompositions}

A \emph{relational structure} is a pair $R:=(E,(\rho_{i})_{i\in I})$
made of a set $E$ and a family of $m_i$-ary relations $\rho_i$ on $E$.
We denote by $R_{\restriction A}$, that we call \emph{restriction} of $R$ to $A$,  the substructure  induced by
$R$ on a subset $A$ of $E$. We consider these substructures up to
isomorphism. If needed,   we consider  \emph{isomorphic types},  objects
associated to relational structures in such a way that the types
$\tau(S_1)$ and $\tau (S_2)$ of two relational structures are equal if
and only if the two relational structures are isomorphic. In our case,
we may  identify the isomorphic type $\tau (R_{\restriction A})$ of the
substructure of $R$ induced on a finite subset $A$ of $E$ to its
\emph{orbit} $\tau (A):= \{A'\subseteq E: R_{\restriction_A} \;
\text {is isomorphic to}\; R_{\restriction_A}\}$.

\label{section.finiteMonomorphicDecomposition}

Let $R$ be a relational structure on a set $E$.  A subset $B$ of $E$
is a \emph{monomorphic part} of $R$ if for every integer $n$ and every
pair $A, A' $ of $n$-element subsets of $E$ the induced structures on
$A$ and $A'$ are isomorphic whenever $A\setminus B=A'\setminus B$. 
A partition of $E$ into monomorphic parts is a \emph{monomorphic decomposition} of $R$. 

The crucial property of such a  partition is given in Fact~\ref{factdecomp}:

Let $(E_x)_{x\in X}$ be a set partition of $E$. Set $X_\infty:=\{x\in
X \suchthat |E_x|=\infty\}$; for a finite subset $A$ of $E$,
set $d_x(A):=|A\cap E_x|$, and denote by $\dvect(A):=(d_x(A))_{x\in
  X}$ the statistics of intersection sizes. 
  
\begin{fact}\label{factdecomp}\noindent \emph{ The partition    $(E_x)_{x\in X}$ is a
monomorphic decomposition  of $R$ if and only if  the induced structures on
two finite subsets $A$ and $A'$ of $E$ are isomorphic whenever
$\dvect(A)=\dvect(A')$}.  
\end{fact}

As stated in~\cite{Pouzet_Thiery.AgeAlgebra1}[Proposition~2.12], each monomorphic part of $E$ is included into a maximal one w.r.t. inclusion and these maximal parts form a monomorphic decomposition of $R$.  Hence, every monomorphic decomposition is finer than the decomposition into  maximal parts. We call this partition the \emph{minimal monomorphic decomposition}.
The \emph{monomorphic dimension} of $R$ is the number of infinite
parts in its minimal monomorphic decomposition.

Revisiting these notions, Oudrar and Pouzet (see~\cite[p.~168,
\S~7.2.5]{Oudrar.2015}, \cite{Oudrar_Pouzet.2013,Pouzet_SiKaddour.2016})
define this partition in a direct way as follows:
Say that two elements $x$ and  $y$  of $E$ are \emph{equivalent} and set $x\simeq_Ry$ if for every finite subset $F$ of   $E\setminus \{x,y\}$, the restrictions of $R$ to   $\{x\}\cup F$  and $\{y\}\cup F$ are isomorphic.

\begin{lemma}
  The relation $\simeq_R$ is an equivalence relation. Furthermore, the
  equivalence classes of $\simeq_R$ are the maximal parts of $R$,
  hence they form the minimal monomorphic decomposition.
\end{lemma}

As an immediate consequence, we obtain  the following result (occurring as a part of Corollary~2.13 of~\cite{Pouzet_Thiery.AgeAlgebra1}).  

\begin{corollary} \label{cor:automorphism} Every automorphism of $R$ induces a permutation of the maximal parts of $R$. 
\end{corollary}

\subsection{Profile and  age algebra of a relational structure}
The \emph{age} of a relational structure $R:=(E,(\rho_{i})_{i\in I})$ is the set $\age:=\age(R)$ of finite substructures of $R$,
isomorphic substructures being identified. This set was introduced by
Fra{\"\i}ss{\'e} (see~\cite{Fraisse.TR.2000}). The \emph{kernel} of
$R$ is the set $\kernel(R):= \{x\in E: \age(R_{\restriction E} )\not =
\age (R)\}$. We say that $\age(R)$ is \emph{inexhaustible} and $R$ is
\emph{age-inexhaustible} if two members of $\age(R)$ can be embedded in a third with disjoint images. This condition amounts to say that the kernel of $R$ is empty.

The \emph{profile} of $R$ is the function $\profile_R$ which counts
for every integer $n$ the number $\profile_R(n)$ of substructures of
$R$ induced on the $n$-element subsets, isomorphic substructures being
identified.  Clearly, this function only depends  upon the age of $R$. We recall that 
the profile of an infinite relational structure is
non-decreasing. Furthermore, provided that some mild conditions hold, namely either the signature $\mu : =
(m_i)_{i \in I}$ is bounded or the kernel $\kernel(R)$ of $R$ is finite, there are
jumps in the behavior of the profile: the
 growth of $\profile_R$ is either  polynomial  or as fast as
every polynomial 
\cite{pouzet.tr.1978, Pouzet.2006.SurveyProfile} and  \cite{Balogh1, Balogh3, Klazar.2010} for independent  developments on this theme. 

{\bf N.B.: in the sequel, and for the sake of simplicity of
  exposition, we always make the assumption that $\profile_R(n)$ is
  finite for all $n$}. This holds as soon as $I$ is finite.

Let $\K$ be a field of characteristic $0$. Cameron associates a graded algebra $\K.\age (R)$ to each
relational structure $R$~\cite{Cameron.1997}. This  algebra $\K.\age (R)$ depends only upon the age of $R$. 
Its main feature is that
its Hilbert function coincides with the profile of $R$ as long as it
takes only finite values. 

The graded algebra $\K.\age (R)$ is the direct sum
$\bigoplus_{n<\omega} \K.\age (R)_{n}$, where $\K.\age (R)_{n}$ is the
set of $\K$-linear combinations of elements of $\age (R)_{n}$, the set
of substructures of $R$ induced on the $n$-element subsets of $E$
considered up to isomorphy.

 Multiplication is defined by the rule that if $\tau_1 \in  \age (R)_{n}$, $\tau_2 \in \age (R)_{m}$ then 
\begin{equation}\label{eq:product}\tau_1.\tau_2:= \sum_{\tau \in \age (R)_{n+m}}  c^{\tau}_{\tau_1, \tau_2}\tau
\end{equation}
 where \begin{equation}
  c_{\tau_1,\tau_2}^\tau := |\{ (A_1,A_2) \suchthat A_1\uplus A_2=A,
  \type{A_1}=\tau_1, \type{A_2}=\tau_2 \text { and } \tau (A)= \tau \}|\,.
\end{equation}

We recall two results: 

Let $e$ be the sum of isomorphic types of the one-element restrictions of $R$ (we can identify it to $\sum_{a\in E} \{a\}$). Let $U$ be
the graded free algebra $\K[e]=\bigoplus_{n=0}^\infty \K e^n$.

\begin{theorem}
  \label{theorem.cameron}\cite{Cameron.1997}
  If $R$ is infinite then $e$ is not a zero-divisor; namely for any
  $u\in \agealgebra(R)$, $e u=0$ if and only if $u=0$.
\end{theorem}
This result implies that $\profile_R$ is non decreasing. Indeed, the
image of a basis of the vector space $\agealgebra(R)_n$ under
multiplication by $e$ is a linearly independent subset of
$\agealgebra(R)_{n+1}$.

 We recall the following result: 

\begin{theorem}\label{thm:non-zero} \cite{Pouzet.2008.IntegralDomain} If  $R$ is age-inexhaustible then the age algebra $\K.\age (R)$ has no non-zero
divisor.
\end{theorem}

In the sequel, we give some general properties relating relational
structures and algebras.

\subsection{Operations on relational structures and age algebras}

In this section and in Section~\ref{ageAlgebraPoly}, a useful
technical device is to embed the age algebra $\agealgebra(R)$ of a
relational structure $R$ in a larger algebra, the set algebra, whose
definition we recall now. 

Let $\K$ be a field of characteristic $0$,
and $E$ be a set. For $n\geq 0$, denote by $[E]^n$ the set of the
subsets of $E$ of size $n$, and let $\K^{[E]^{n}}$ be the vector space
of maps $f: [E]^n \rightarrow \K$. The \emph{set algebra} is the
graded connected commutative algebra $\setalgebra:=\bigoplus_n
\K^{[E]^{n}}$, where the product of $f: [E]^m\rightarrow \K$ and $g:
[E]^n\rightarrow \K$ is defined as $fg: [E]^{m+n}\rightarrow \K$ such
that:
\begin{equation}
  (fg)(A):= \sum_{(A_1,A_2) \suchthat  A=A_1\uplus A_2}  f(A_1)g(A_2) \,.
\end{equation}
Identifying a set $S$ with its characteristic function $\chi_S$,
elements of the set algebra can be thought as (possibly infinite but of
bounded degree) linear combination of sets; the unit is the empty set,
and the product of two sets is their disjoint union, or $0$ if their
intersection is non trivial.

The desired embedding of $\agealgebra(R)$ into $\setalgebra$ is
obtained by mapping an isomorphism type $\tau$ to its orbit sum
$\sum_{A\suchthat\tau(A)=\tau} A$.

\begin{remark}
  The original definition of the age algebra, which we used above,
  requires the assumption that the profile is finite. See e.g.
  Example~\ref{example.infinite_profile} for what can go wrong otherwise.
  The set algebra
  offers a natural frame to formulate an equivalent definition that
  extends beyond this assumption and further to hereditary equivalence
  relations (this is for example the presentation adopted
  in~\cite{Pouzet.2008.IntegralDomain}).

  Namely, consider a relational structure $R$ on a set $E$ with
  profile not necessarily finite, or more generally a hereditary
  equivalence $\equiv$ on the finite subsets of $E$. Say that a map
  $f: [E]^n \rightarrow \K$ is \emph{invariant} if it is constant on
  each equivalence class on $[E]^n$ induced by $R$ or $\equiv$ (e.g.
  $f(A)=f(A')$ whenever the restrictions $R_{\restriction A}$ and
  $R_{\restriction A'}$ are isomorphic). Consider the space
  $\agealgebra=\bigoplus_n \agealgebra_n$, where $\agealgebra_n$ is
  the set of all invariant maps for a given $n$. Observe that
  $\agealgebra$ is a subalgebra of the set algebra, and call it the
  age algebra.
\end{remark}

\begin{proposition}
  \label{proposition.constructions}
  \begin{enumerate}[(i)]
  \item Let $R:=(E, (\rho_i)_{i\in I})$ be a relational structure,  
    $E'$ be a subset of $E$ and  $R':= R_{\restriction E'}$ be the structure induced by $R$
    on $E'$. Then, $\agealgebra(R')$ is a quotient of
    $\agealgebra(R)$.
  \item Let $R:=(E, (\rho_i)_{i\in I})$ be a relational structure,  
    $I'$ be a subset of $I$, and  $R':=(E, (\rho_i)_{i\in I'})$. Then, $\agealgebra(R')$ is a
    subalgebra of $\agealgebra(R)$.
  \item Let $R:=(E, (\rho_i)_{i\in I})$ and $R':=(E', (\rho'_i)_{i\in
      I'})$ be two relational structures on disjoint subsets, and
    define their \emph{direct sum} as the relational structure
    $R\oplus R' := (E\cup E', (\rho_i)_{i\in I}, (\rho'_i)_{i\in
      I'})$. If needed, add an appropriate unary relation to ensure
    that singletons of $E$ and $E'$ are not isomorphic. Then,
    $\agealgebra(R\oplus R')$ is isomorphic to the tensor product $\agealgebra(R)
    \otimes \agealgebra(R')$. Furthermore, if $(E_i)_i$ and $(E'_j)_j$
    are (minimal) monomorphic decompositions of respectively $R$ and
    $R'$, then $(E_i)_i\cup (E'_j)_j$ form a (minimal) monomorphic
    decomposition of $R\oplus R'$.

  \end{enumerate}
\end{proposition}
\begin{proof}
  (i) At the level of the set algebra, the vector space spanned by the
  sets which are not subsets of $E'$ is an ideal; so the linear map
  $\phi$ which kills those sets is an algebra morphism. Furthermore
  looking at the image of basis elements shows that $\agealgebra(R')$
  is the image by $\phi$ of $\agealgebra(R)$.

  (ii) Each isomorphism class for $R$ splits into one or more
  isomorphism class(es) for $R'$; hence each basis element for
  $\agealgebra(R)$ is accordingly the sum of one or more basis
  element(s) for $\agealgebra(R')$.

  (iii) Identify each subset $A$ of $E\cup E'$ with the element
  $(A\cap E) \otimes (A\cap E')$ of the tensor product and check that a basis element for
  $\agealgebra(R\otimes R')$ is the tensor product of a basis element
  for $\agealgebra(R)$ by a basis element of $\agealgebra(R')$. The
  resulting Hilbert series is the product of the Hilbert series for
  $R$ and $R'$.
\end{proof}

Let $R=(E, (\rho_i)_{i\in I})$ and $R'=(E', (\rho'_i)_{i\in I'})$ be
two relational structures. The \emph{wreath product} of $R$ and $R'$
is the relational structure
$R\wr R':=\left(E\times E', (\tilde \rho_i)_{i\in I}, (\tilde
  \rho'_i)_{i\in I'}\right)$, where
\begin{displaymath}
  \tilde\rho_i\left( (e_1,e'_1), \dots, (e_k,e'_k) \right)%
  \text{ if and only if }%
  \rho_i(e_1,\dots,e_k) \text{ and } e'_1=\dots=e'_k\,,
\end{displaymath}
and
\begin{displaymath}
  \tilde\rho_i\left( (e_1,e'_1), \dots, (e_k,e'_k) \right) %
  \text{ if and only if }%
  \rho'_i(e'_1,\dots,e'_k)\,.
\end{displaymath}

\begin{proposition}
  If $(E_i)_i$ and $(E'_j)_j$ are monomorphic decompositions of
  respectively $R$ and $R'$, then $(E_i\times E'_j)_{i,j}$ forms a
  monomorphic decomposition of $R\wr R'$. In particular, if $R$ is
  monomorphic, then the $E_x := E \times \{x\}$ for $x\in E'$ form a
  monomorphic decomposition of $R\wr R'$.
\end{proposition}
\begin{proof}
  Direct consequence of Proposition~\ref{proposition.constructions} (ii) and (iii).
\end{proof}

Let $R$ be a relational structure. A \emph{local isomorphism} of $R$
is any isomorphism from a restriction of $R$ to another restriction.
We denote by $\loc(R)$ the set of local isomorphisms of $R$. Endowed
with the partial composition product, $\loc(R)$ becomes a monoid (see
e.g. Section~\ref{section.groupoids}). The wreath product of two such
monoids can be defined in the same vein as for permutation groups:
$\loc(R)$ acts independently within each $E_x$ while $\loc(R')$ acts
globally on the $(E_x)_{x\in E'}$.

\begin{remark}
  $\loc(R\wr R')$ includes the wreath product $\loc(R)\wr \loc(R')$;
  this inclusion may be strict: take the wreath product of a
  $2$-antichain with itself.
\end{remark}

\subsection{Initial  segments of an age and ideals of a ring}

Final segments play for posets the same role than ideals for rings. In
this section, we order the age of a relational structure by
embeddability: if $\tau_1, \tau_2 \in \age(R)$, we set
$\tau_1\leq \tau_2$ if $\tau_1$ is the type of a structure induced on
some structure of type $\tau_2$. This order is up-directed: for any
$\tau_1,\tau_2\in \age(R)$ there exists $\tau\in\age(R)$ such that
$\tau_1, \tau_2\leq \tau$.
An \emph{initial segment} of $\age(R)$ is any subset $F$ of $\age(R)$
such that $\tau_2\in F$ implies $\tau_1\in F$ whenever
$\tau_1\leq \tau_2$.

We describe briefly the correspondence between initial segments of an age  and ideals of the age algebra. %

\begin{theorem}\label{thm:final-ideal} Let $\age:=\age(R)$ be the  age of a relational structure $R$ and $\agealgebra$ be its age algebra.
  Recall that we assume that $R$ has finite profile. If $\age'$ is an initial segment of $\age$ then:
 \begin{enumerate} [{(i)}]
 \item the vector subspace $J:= \K.(\age\setminus \age')$ spanned by $\age\setminus \age'$ is an ideal of $\K.\age $. Moreover, the quotient of $\agealgebra$ by $J$ is  a ring isomorphic to  the ring $\agealgebra'$. 
 \item if $J$  is irreducible then $\age'$ is a subage of $\age$;
 \item $J$ is a prime ideal if and only if $\age'$ is an inexhaustible age.
 \end{enumerate}
\end{theorem}
\begin{proof}

$(i)$ Since $J$ is a subspace of $\K.\age $, it suffices to show that:
\begin{equation}\label{eqideal}
  P\in \age\setminus \age'
  \; \text { and }\;
  Q\in \age
  \;\text{ implies }\;
  P.Q\in J\,.
\end{equation}

We have $P.Q:=\sum_{R= P\cup Q} R$. Each member $R$ of this sum embeds $P$; since $\age'$ is an initial segment of $\age$ and $P\in \age\setminus \age'$, it follows that  $R\in  \age\setminus \age'$ proving that $P.Q\in J$. 

$(ii)$ For convenience, we consider here complements of initial
segments, that is \emph{final segments}.
Let  $F(\age)$ be the set of final segments of $ \age$ and $\Id(\agealgebra)$ be the set of ideals  of $\agealgebra$, these sets being ordered by inclusion.  Let $\varphi: F(\age)\rightarrow \Id(\agealgebra)$ defined by setting  $\varphi(F):= \K.F$ if $F\not =\emptyset$ and $\varphi(F):= \{0\}$ otherwise. As shown in $(i)$ this map is well-defined. It preserves arbitrary joins and meets that is:
\begin{equation}\label{eqjoin}
\K.\bigcup_{i\in I}F_i = \sum_{i\in I}\K.F_i\,;
\end{equation}
\begin{equation}\label{eqmeet}
\K.\bigcap_{i\in I}F_i = \bigcap_{i\in I} \K.F_i\,.
\end{equation}
The first equality is obvious. For the second equality, we have
trivially $\K.\bigcap_{i\in I} F_i\subseteq \bigcap_{i\in I} \K.F_i$.
For the converse, let $P\in \bigcap_{i\in I} \K.F_i$. We have
$P= \sum_{j\in I_i}P_{ij}$ with $P_{ij}\in \K.F_i$ for each $i\in I$.
By definition, members of $\age$ are linearly independent, thus
the decomposition of $P$ into a linear sum of members of $\age$
is unique. It follows that $P_{ij}$ is independent of $I_i$ thus
belongs to $\bigcap_{i\in I} F_i$, hence
$P\in \K.\bigcap_{i\in I}F_i$. This completes the proof that the
second equality holds.

The map $\varphi $ restricted to $F(\age)\setminus \{\emptyset\}$ is
one-to-one. The inverse image of a meet-irreducible element of
$\Id(\agealgebra)$ is a meet-irreducible element of $F(\age)$. As it
is well-known, for an arbitrary poset $\poset$, the meet-irreducible
members of $F(\poset)$ are exactly the complements of \emph{ideals} of
$\poset$ (that is up-directed initial segments of $\poset$); in the
case $\poset= \age$ the ideals are the subages of $\age$. This
completes the proof of $(ii)$.

$(iii)$ Suppose that $\age'$ is not an inexhaustible age. Then there are $S'_1, S'_2\in \age'$ such that no $R\in \age'$ extends $S'_1, S'_2$ on the disjoint union of their domains. But then the product $S'_1.S'_2$ is in $\K.(\age\setminus \age')$, proving that $\K(\age\setminus \age')$ is not prime. 
 Conversely, let $P,Q\in \agealgebra$ such that $P.Q\in J:= \K.(\age\setminus \age')$. We may write $P= P_1+P_2$, $Q=Q_1+Q_2$ with $P_1,Q_1\in J$, 
$P_2, Q_2\in \K.\mathcal (A')$. We have $P_2.Q_2\in J$. Since  $\agealgebra'$ is isomorphic to  the quotient $\agealgebra/ J$, the product  $P_2.Q_2$ in $\agealgebra'$ is $0$.  But since $\age'$ is inexhaustible, $\agealgebra'$ has no non-zero divisor by Theorem \ref{thm:non-zero}, hence $P$ or $Q$ belongs to $J$, proving that $J$ is prime. 
\end{proof}
\begin{problem}
  Does the converse of $(ii)$ hold. That is, is
  $J:= \K.(\age\setminus \age')$ meet-irreducible whenever $\age'$ is
  a subage of $\age$?
\end{problem}

Ages are special cases of \emph{hereditary classes of relational
  structures} (see~\cite{Fraisse.TR.2000,Oudrar.2015}). The definition
of age algebra carries over straightforwardly to such classes, and the
correspondence between initial segments and ideals would be best further
explored in this context.

\subsection{Well-quasi-ordering of an age}

The age algebra of a relational structure being graded and connected,
it is finitely generated if and only if it is a Noetherian ring. There
are posets which play a role as important in the theory of ordered
sets as noetherian rings in the theory of ring.  These posets, studied
first by Higman \cite{higman.1952}, are said
\emph{well-quasi-ordered}, in brief \emph{wqo}. Namely, a poset $\poset$ is
\emph{wqo} if the set $F(\poset)$ of final segments of $\poset$ is Noetherian
w.r.t. the inclusion order.

Consider an age $\age(R)$ ordered by embeddability. By (i) of
Theorem~\ref{thm:final-ideal}, $F(\age(R))$ embeds into the
collection of ideals of $\agealgebra(R)$.  Consequently:
\begin{proposition}
  If the age algebra $\agealgebra(R)$ is finitely generated then the
  age of $R$ is well-quasi-ordered by embeddability.
\end {proposition}

The reciprocal does not hold. Indeed, the age of a relational
structure is well-quasi-ordered as soon as the age has polynomial
growth and finite
kernel~\cite{pouzet.tr.1978, Pouzet.2006.SurveyProfile} whereas we have
seen that the age algebra is not necessarily finitely generated in
this case. In fact, well-quasi-ordering of the age does not even imply
that the profile is bounded above by a polynomial: indeed, any age with non polynomially bounded
profile contains a well-quasi-ordered age with the same property (use
Lemma~4.1 of~\cite{Pouzet_Thiery.AgeAlgebra1} and the remark above
it).

\begin{problem}
  Is the profile of a relational structure $R$ bounded by some
  exponential whenever the age $R$ is well-quasi-ordered by
  embeddability?
\end {problem}

\subsection{The age algebra as a subring of a polynomial ring}
\label{ageAlgebraPoly}

\begin{theorem}\label{theorem.ageAlgebraPoly}
  If $R$ has a monomorphic decomposition into finitely many infinite
  parts $(E_i)_{i\in X}$, then the age algebra $\agealgebra(R)$ is
  isomorphic to a subalgebra $\K[X]^R$ of $\K[X]$.

  This may be generalized:
  \begin{itemize}
  \item If some parts are finite, $\agealgebra(R)$ is isomorphic to a
    subalgebra of the quotient ring $\K[X]/(x^{|E_x|+1})$.
  \item If there are infinitely many parts, $\agealgebra(R)$ is
    isomorphic to a subalgebra of $\K[[X]]/(x^{|E_x|+1})$ made of
    series of bounded degree.
  \end{itemize}
\end{theorem}
\begin{proof}
  Let $(E_x)_{x\in X}$ be a finite set partition of $E$ into infinite
  subsets. We first use it to define an embedding of $\K[X]$ into the
  set algebra $\setalgebra$. For an exponent vector
  $\dvect:=(d_x)_{x\in X}\in \N^X$, define the monomial
  $X^\dvect:=\prod_{x\in X} x^{d_x}$ and set $\dvect!:= \prod_{x\in X}
  d_x!$. Set furthermore $O(\dvect) := \{A\subseteq E:
  \dvect(A):=\dvect\}$ and let $\chi_{O(\dvect)}$ be the
  characteristic map of $O(\dvect)\in \setalgebra$, which is best
  interpreted as the (possibly infinite) sum $\sum_{\dvect(A)=\dvect}
  A$. Let $\phi: \K[X] \hookrightarrow \setalgebra$ be defined by
  setting $\phi(\x^\dvect ):= \dvect!  \chi_{O(\dvect)}$. 

  \noindent {\bf Claim.} $\phi$ is a one-to-one morphism of algebras.
  
  Indeed, we have $\chi_{O(\dvect)}\chi_{O(\dvect')} =\frac{(\dvect+\dvect')!}{\dvect!\dvect'!}\chi_{O(\dvect+\dvect')}$.

  For an isomorphic type $\tau$ in the age of $R$, viewed as a subset of $E$, set $\dvect(\tau):=
  \{\dvect(A): A \in \tau\}$ and $\mu (\tau):= \sum_{\dvect\in \dvect(\tau)
  }\frac{1}{\dvect!}\x^\dvect$.  Note that any monomial $X^\dvect$ in
  $\K[X]$ appears in exactly one polynomial $\mu(\tau)$; in particular,
  the later are linearly independent. Finally, let $\K[X]^R$ be the
  subset of $\K[X]$ made of finite linear combinations of polynomials
  of the form $\mu(\tau)$.
  
  \noindent {\bf Claim.}The map $\phi$ induces an isomorphism from $\K[X]^R$
  onto $\agealgebra(R)$.  Indeed, we have:
  \begin{displaymath}
    \phi(\mu(\tau))
    = \sum_{\dvect\in \dvect(\tau)} \phi(\frac{1}{\dvect!} \x^\dvect)
    = \sum_{\dvect\in \dvect(\tau)} \chi_{O(\dvect)}
    = \chi_\tau\ .
  \end{displaymath}

  The generalizations are straightforward.
\end{proof}
In the sequel, when the monomorphic decomposition of a relational
structure is clear from the context, we often use polynomials as a
convenient and compact way of writing elements of its age algebra.

At this stage, a natural question is to characterize the subalgebras
of $\K[X]$ that can be constructed as age algebras of relational
structures with a finite monomorphic decomposition. As suggested in
the introduction, the context can be generalized to algebras of
hereditary equivalence relations. This makes the answer simpler: in
the sequel, we obtain a one-to-one correspondence with subalgebras of
$\K[X]$ defined by ``grouping monomials together''.

Let $\sim$ be an equivalence relation on $E^{<\omega}$, with
equivalence classes $(\tau_i)_{i\in I}$. Consider the subspace
$\agealgebra(\sim)$ of $\setalgebra$ spanned by the $\chi_{\tau_i}$,
where $\tau_i$ ranges through the finite equivalence classes. For
simplicity, we assume that there are finitely equivalence classes for
each size of subset.
\begin{lemma}
  \label{lemma.hereditary_algebra}
  $\setalgebra$ is a subalgebra of $\setalgebra$ if and only if $\sim$
  is hereditary.
\end{lemma}
\begin{proof}
  The if part is the same as for age algebras. For the reciprocal,
  consider a set $C$, its equivalence class $\tau$, and define
  $e_d\in\agealgebra$ as $\sum \chi_{\tau_i}$ where $i$ ranges through
  the finite equivalence classes of sets of size $d$. Equivalently,
  $e_d$ is the (infinite) sum of all subsets of $E$ of size $d$.
  \begin{displaymath}
    \chi_\tau = \sum_{A\suchthat |A|=|C|+d} c_C^A\, A,\,
  \end{displaymath}
  where $c_C^A=|\{X\subset A \suchthat X\sim C\}|$. One recognizes the
  coefficients appearing in the hereditary condition, and it follows
  that  $c_C^A=c_C^B$ whenever $A\sim B$.
\end{proof}

\begin{proposition}
  Let $A$ be a graded subalgebra of $\K[X]$ that admits a basis
  $(B_i)_{i\in I}$ such that each monomial of $\K[X]$ appears in
  exactly one $B_i$. Then, there exist a hereditary equivalence
  relation $\sim$ whose algebra $\agealgebra(\sim)$ is isomorphic to
  $A$.
\end{proposition}
\begin{proof}
  Write $S_i$ the support of $B_i$. By construction, $(S_i)_{i\in I}$
  forms a partition of the monomials of $\K[X]$. Consider the induced
  equivalence relation of $E^{<\omega}$, with equivalence classes
  given by $\tau_i :=\dvect^{-1}(S_i)$ for $i\in I$. Let
  $\agealgebra(\sim)$ be the subspace of $\setalgebra$ spanned by the
  $\chi_{\tau_i}$.

  We now prove that the basis elements $B_i$ of the former are mapped
  one to one by $\phi$ to the basis elements $\chi_{\tau_i}$ of the
  latter.

  Up to rescaling the variables in $X$ once for all, we may assume
  without loss of generality that the basis elements of degree $1$ are
  sums of variables. In particular, $A$ contains $e=x_1+\dots,x_n$ and
  therefore also
  $e^d = \sum_{\dvect\suchthat |\dvect|=d} \frac{x^\dvect}{\dvect!}$.
  Up to rescaling the $B_i$'s, we may therefore assume without loss of
  generality that each $B_i$ is of the form
  $\sum_{\dvect\in S_i} x^\frac{\dvect}{\dvect!}$, where $S_i$ is the
  support of $B_i$. Therefore, $\phi(B_i)=\chi_{\tau_i}$, as desired.

  It follows that $\agealgebra(\sim)$ is indeed an algebra, and
  therefore, using Lemma~\ref{lemma.hereditary_algebra}, that $\sim$
  is hereditary.
\end{proof}

\subsection{Case of orbital algebras}

As pointed out by Cameron~\cite{Cameron.1990}, invariant rings of
finite permutation groups are special cases of orbital algebras. They
are also algebras of relational structures admitting a finite
monomorphic decomposition (see Example~A.16
of~\cite{Pouzet_Thiery.AgeAlgebra1}). The converse holds, in the sense
that, possibly up to some straightforward quotienting, invariant rings
of finite permutation groups are exactly the orbital algebras of
groups admitting a finite monomorphic decomposition.

Let us state this more precisely. Let $G$ be a permutation group on a
set $E$. Choose any relational structure $R$ encoding the orbits of
$G$. Note that the definition of a monomorphic decomposition depends
only on the isomorphism relation between finite subsets of $E$, and
thus is independent of the chosen relational structure. We can thus
forget about the relational structure, and all the concepts of minimal
monomorphic decomposition, monomorphic dimension, etc, are well
defined for $G$ itself.

\begin{theorem}
  \label{theorem.orbit_algebra_finite_monomorphic}
  Let $G$ be a permutation group on a set $E$, and assume that the
  minimal monomorphic decomposition $(E_x)_{x\in X}$ of $E$ is finite.

  Then $G$ induces a finite subgroup $\tilde G$ of the symmetric group
  $\sg_X$ on $X$. If the components are all infinite, then the orbital
  algebra is isomorphic to the invariant ring $\K[X]^{\tilde{G}}$ of
  $\tilde{G}$. Otherwise, it is isomorphic to the quotient thereof
  obtained by setting $x^{|E_x|+1}=0$ for all $x$ such that $E_x$ is
  finite.
\end{theorem}

\begin{proof}
  By Corollary \ref {cor:automorphism},   every
  permutation $\sigma\in G$ induces a permutation $\overline \sigma$ of the
  components $(E_x)_{x\in X}$, which we can identify with a
  permutation of $X$. Choose any relational structure encoding the
  orbits of $G$, and consider the isomorphism $\Phi$ from $K[X]^R$ to
  $\agealgebra(R)$ as in the proof of
  Theorem~\ref{theorem.ageAlgebraPoly}. If all components are
  infinite, it is easy to see that $K[X]^R$ is nothing but the
  invariant ring $K[X]^{\tilde G}$. Otherwise, the same quotienting
  occurs as in~Theorem~\ref{theorem.ageAlgebraPoly}.
\end{proof}

\section{Finite generation}
\label{section.finiteGeneration}

This section is devoted to our main result: the combinatorial
characterization of relational structures admitting a finite
monomorphic decomposition whose algebra is finitely generated. We
setup the ground with a special case and an example before proceeding
to the general case. We conclude with the case of tournaments.

\subsection{Finite generation for bounded profiles}

We recall the following result.
\begin{theorem}[Theorem 1.5 of~\cite{Pouzet_Thiery.AgeAlgebra1}]
  \label{theorem.boundedProfile}
  Let $R$ be a relational structure with $E$ infinite. Then, the
  following properties are equivalent:
  \begin{enumerate}
  \item[(a)] The profile of $R$ is bounded.
  \item[(b)] $R$ is almost-monomorphic.
  \item[(b')] $R$ has a monomorphic decomposition into finitely many
    parts, at most one being infinite.
  \item[(c)] $R$ is almost-chainable.
  \item[(d)] The Hilbert series is of the following form, with
    $P(Z)\in \N[Z]$ and $P(1)\ne 0$:
    \begin{displaymath}
      \hilbert_R=\frac {P(Z)} {1-Z}\ .
    \end{displaymath}
  \item[(e)] The age algebra is a finite dimensional free-module over
    the free-algebra $\K[e]$, where $e:=\sum_{a\in E} \{a\}$; in
    particular it is finitely generated and Cohen-Macaulay.
  \end{enumerate}
\end{theorem}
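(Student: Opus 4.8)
The plan is to split the four-way equivalence into an algebraic core, which handles (a)$\,\Leftrightarrow\,$(c)$\,\Leftrightarrow\,$(d) uniformly, and a combinatorial part, which handles (a)$\,\Leftrightarrow\,$(b). The engine for the algebraic core is the single lemma that, since $E$ is infinite, multiplication by $e_1$ is injective on $\agealgebra(R)$: concretely $(e_1 f)(Q)=\sum_{p\in Q} f(Q\setminus\{p\})$, and this injectivity is exactly the linear-algebra fact underlying the non-decrease of the profile in Theorem~\ref{profilpouzet1}. Granting it, $\K[e_1]$ is a genuine polynomial ring in one variable (the powers $e_1^n$ lie in distinct degrees and are nonzero by injectivity), and $\agealgebra(R)$ is a torsion-free graded module over it.

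First I would record the structural consequence: a graded module $A=\bigoplus_n A_n$ over $\K[e_1]$ on which $e_1$ acts injectively is automatically free. Indeed, choosing in each degree a complement $C_n$ with $A_n = e_1 A_{n-1}\oplus C_n$ (so $C_0=A_0$), injectivity of $e_1$ gives $A_n=\bigoplus_{j\le n} e_1^{\,n-j}C_j$, whence any homogeneous basis of $\bigoplus_j C_j$ is a free $\K[e_1]$-basis. Here $\dim C_j=\profile_R(j)-\profile_R(j-1)\ge 0$, the total number of generators is $\lim_j \profile_R(j)$, and the Hilbert series is $P(Z)/(1-Z)$ with $P(Z)=\sum_j (\dim C_j)Z^j\in\N[Z]$ and $P(1)=\lim_j\profile_R(j)\ge\dim C_0=1$. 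This single computation reads off all the equivalences at once: the profile is bounded iff there are finitely many generators iff the module is finitely generated (equivalently $\agealgebra(R)$ is a finitely generated algebra, and Cohen-Macaulay, being $\K[e_1]$-free with $e_1$ a homogeneous system of parameters), iff the Hilbert series has the stated shape; and conversely from that shape the coefficient of $Z^n$ equals $P(1)$ for $n\ge\deg P$, so the profile is eventually constant, hence bounded. This settles (a)$\,\Leftrightarrow\,$(c)$\,\Leftrightarrow\,$(d).

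It remains to tie in (b). The direction (b)$\,\Rightarrow\,$(a) is a direct count: with one infinite block and finite blocks of sizes $s_1,\dots,s_m$, the isomorphism type of an $n$-subset $A$ depends only on $\dvect(A)$, and for $n$ large the admissible vectors are parametrized by the finite-block intersections alone, giving $\profile_R(n)\le\prod_i(s_i+1)$. For (a)$\,\Rightarrow\,$(b) I would start from the minimal monomorphic decomposition $(E_x)_{x\in X}$ furnished by Proposition~\ref{homogeneouscomponent} and argue that a bounded profile forces $|X_\infty|\le 1$ and $|X|<\infty$: two distinct infinite blocks are, by minimality, separated by a finite witness on which different splittings of the block-points produce non-isomorphic restrictions, and amplifying this witness across arbitrarily large subsets manufactures an unbounded number of types; a similar amplification bounds the number of blocks by the eventual constant value of the profile.

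The main obstacle is precisely this implication (a)$\,\Rightarrow\,$(b): converting ``bounded profile'' into finiteness of the minimal decomposition. The delicate point is that distinct intersection vectors $\dvect(A)$ need not yield distinct isomorphism types, so one cannot simply equate the profile with a count of vectors, and the amplification argument must be run with care (this is where I expect to lean on the structure theory of monomorphic decompositions developed in~\cite{pouzet_thiery.AgeAlgebra.2005}). The algebraic half above, by contrast, is essentially formal once injectivity of multiplication by $e_1$ is in hand.
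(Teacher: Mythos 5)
Your algebraic core is correct and is essentially the paper's own route: the paper also rests (a) $\Leftrightarrow$ (c) $\Leftrightarrow$ (d) entirely on Cameron's theorem that $e_1$ is not a zero divisor in $\agealgebra(R)$~\cite{cameron.1997} (a result that both you and the paper invoke rather than prove), and your explicit choice of complements $C_n$ with $A_n = e_1 A_{n-1}\oplus C_n$ simply spells out what the paper compresses into ``(a) and (c) follow straightforwardly from (d), and (d) follows from (a) because $e_1$ is not a zero divisor''. Your counting argument for (b) $\Rightarrow$ (a) is also fine, and your reading of (d) (the content lies in the \emph{finite} generation, since freeness over $\K[e_1]$ is automatic once $e_1$ acts injectively) matches the intended one.

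The genuine gap is (a) $\Rightarrow$ (b), and it is bigger than your own caveat admits. Your amplification sketch addresses only the configuration of two infinite blocks in the minimal decomposition of Proposition~\ref{homogeneouscomponent}; it offers nothing against a minimal decomposition with infinitely many \emph{finite} blocks (for instance all singletons, as happens for the infinite path or the Rado graph). Minimality does provide, for each pair of distinct blocks, a finite witness showing that their union is not a monomorphic part, but these witnesses vary with the pair and have a priori unbounded size, so without a uniformization step they cannot be assembled into unboundedly many pairwise non-isomorphic restrictions of a single cardinality. That uniformization is precisely where the known proof needs Ramsey's theorem (bounded profile yields arbitrarily large finite monomorphic subsets) followed by a compactness argument of first-order logic to produce the infinite block; this is the Fra{\"\i}ss\'e--Pouzet theorem on almost-monomorphic structures, and the paper disposes of this implication purely by citation, to~\cite{Fraisse_Pouzet.1971} for finite signature and to~\cite{pouzet.rpe.1981} for arbitrary signature. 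Moreover, the fallback you propose --- the structure theory of monomorphic decompositions of~\cite{pouzet_thiery.AgeAlgebra.2005} (cf.\ Lemma~\ref{lemma.minimalGrowthRate}) --- cannot fill the hole: that theory takes a finite monomorphic decomposition as a standing hypothesis, which is exactly the conclusion that (a) $\Rightarrow$ (b) must deliver, so leaning on it here would be circular.
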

See ~\cite{Pouzet_Thiery.AgeAlgebra1} for the definition of almost-monomorphy and almost-chainability.
Note that, in~\cite{Pouzet_Thiery.AgeAlgebra1}, Theorem~1.5 is stated
before the introduction of monomorphic decompositions, and thus does
not mention (b'). The equivalence between (b) and (b') follows from
Theorem~2.25 of~\cite{Pouzet_Thiery.AgeAlgebra1}. 
\subsection{Proof of non finite generation on a prototypical example}

The age algebra of a relational structure $R$ admitting a finite
monomorphic decomposition is not necessarily finitely generated.
A prototypical example is this:

\begin{example}
  \label{example.notfinitelygenerated}%
  Let $G$ be the direct sum $K_{(1, \omega)}\oplus \overline K_\omega$
  of an infinite wheel and an infinite independent set. There are two
  infinite monomorphic parts, $E_1$ the set of leaves of the wheel
  and $E_2$ the independent set, and one finite, $E_3$, containing the
  center $c$ of the wheel. Each isomorphism type consists of a wheel
  and an independent set, so the Hilbert series is
  $\hilbert_G(Z)=(1+\frac{Z^2}{1-Z})\frac{1}{1-Z}=\frac{1-Z+Z^2}{(1-Z)^2}$.

  What makes this relational structure special is that the monomorphic
  decomposition $(E_1,E_2,E_3)$ is minimal, whereas $(E_1,E_2)$ is
  \emph{not} a minimal monomorphic decomposition of the restriction of
  $R$ to $E_1\cup E_2$. We now prove that this causes the age algebra  $\agealgebra(G)$
  not to be finitely generated.
  Consider the subalgebra $U:=\K[e]$, where $e:=\sum_{a\in V(G)} \{a\}$. In each degree
  $d$, it is spanned by the sum $b_d$ of all subsets of size $d$ of
  $E$, since $e^d = d!b_d$. Key fact: any element $s$ of
  $\agealgebra(G)$ can be uniquely written as $s =: a(s) + b(s)$ where
  $b(s)$ is in $U$, and all subsets in the support of $a(s)$
  contain the unique element $c$ of $E_3$. Note in particular that
  $a(s)a(s')=0$ for any $s,s'$ homogeneous of positive degree.
  
  Suppose that  $S$ is a finite generating set of $\agealgebra(G)$; we may suppose that $S$ is made  of
  homogeneous elements of positive degree.  By the key observation above,
  $\{a(s), s\in S\}$ generates $\agealgebra(G)$ as a $U$-module.
  It follows that the graded dimension of $\agealgebra(G)$ is bounded by
  $|S|$. But this graded dimension is the profile of $G$ which grows linearly. This gives a contradiction.
\end{example}

\subsection{Combinatorial characterization}

The previous example suggests that the finite generation of the age
algebra is related to the behavior of the minimal monomorphic
decomposition with respect to restriction. This is indeed the case,
and we get a complete characterization of when the age algebra is
finitely generated.

\begin{definition}
  Let $(E_x)_{x\in X}$ be the minimal monomorphic decomposition of
  $R$. Whenever restricting $R$ to some union $\bigcup_{x\in X'} E_x$
  of infinite monomorphic parts, $(E_x)_{x\in X'}$ remains a
  monomorphic decomposition. If it always remains minimal, the
  decomposition $(E_x)_{x\in X}$ is called \emph{hereditary minimal}.
\end{definition}

\begin{remark}
  It is sufficient to check the condition on pairs of infinite parts.
  Namely, a monomorphic decomposition $(E_x)_{x\in X}$ is hereditary
  minimal iff, for any two infinite monomorphic parts $E_x$ and
  $E_y$, the monomorphic decomposition $(E_x,E_y)$ of the restriction
  of $R$ on $E_x\cup E_y$ is minimal.
\end{remark}

\begin{theorem}
  \label{theorem.finiteGeneration}
  Let $R$ be a relational structure admitting a finite monomorphic
  decomposition. Let $(E_x)_{x\in X}$ be its minimal monomorphic
  decomposition, and $X_\infty$ be the set of indices of the infinite
  monomorphic parts. Then, the following propositions are equivalent:
  \begin{enumerate}[{(a)}]
  \item The monomorphic decomposition is hereditary minimal;
  \item The age algebra $\agealgebra(R)$ is finitely generated;
  \item For some large enough integer $D$, the age algebra $\agealgebra(R)$ contains (a
    triangular deformation of) the free subalgebra $\sym(x^{D} ,x\in
    X_\infty)$ and is a module of finite type thereupon.
  \end{enumerate}
\end{theorem}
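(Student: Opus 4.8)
The plan is to prove the cycle of implications $(a)\Rightarrow(c)\Rightarrow(b)\Rightarrow(a)$, using the geometry of leading monomials controlled by Lemma~\ref{lemma.addlayer} together with the prototypical obstruction isolated in Example~\ref{example.notfinitelygenerated}. Throughout I work inside the polynomial ring $\K[X]$, where each homogeneous element of $\agealgebra(R)$ is represented by a polynomial supported on monomials $X^A$, and I fix a term order so that the notion of leading monomial is available. The symmetric functions $\sym(x^D, x\in X_\infty)$ play the role of a Noether normalization candidate: the plan is to show that hereditary minimality is exactly what guarantees that the leading monomials of the age algebra fill up a cone on which these power-sum symmetric functions form a homogeneous system of parameters.

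For $(a)\Rightarrow(c)$, I would first use Lemma~\ref{lemma.minimalGrowthRate} to produce the threshold $d$ beyond which isomorphic fat sets share a shape, so that on $d$-fat monomials the age algebra ``sees'' only the shape, i.e. the $\sym_{X_\infty}$-symmetrization. Choosing $D\geq d$, I would show that $x^D$ for $x\in X_\infty$, symmetrized, actually lie in $\agealgebra(R)$: this is where hereditary minimality enters, because it says that after restricting to the infinite blocks no two blocks merge, so distinct shapes remain distinguishable and the symmetric power sums are genuine age-algebra elements rather than collapsing. I would then argue that $\sym(x^D, x\in X_\infty)$ is a polynomial algebra on $k=|X_\infty|$ algebraically independent generators (the power sums in the $k$ block-variables), and invoke Theorem~\ref{theorem.quasipolynomial}: since the Hilbert series has denominator of Krull dimension $k$ and $\agealgebra(R)$ is a finitely generated module over a polynomial ring in $k$ variables precisely when its growth matches $n^{k-1}$, the module-finiteness follows from a dimension count. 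The essential point is that hereditary minimality prevents the ``extra'' low-shape monomials (the analogue of the subsets containing the center $c$ in the example) from accumulating without bound.

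The implication $(c)\Rightarrow(b)$ is immediate: a module of finite type over a finitely generated (indeed polynomial) subalgebra is itself a finitely generated algebra, obtained by adjoining the finitely many module generators to the finitely many algebra generators of $\sym(x^D,x\in X_\infty)$. For the contrapositive $\neg(a)\Rightarrow\neg(b)$, I would generalize the argument of Example~\ref{example.notfinitelygenerated}. If the decomposition is not hereditary minimal, then upon restricting to $\bigcup_{x\in X'}E_x$ two infinite blocks $E_z, E_{z'}$ merge; I would exhibit, exactly as in the example, a direct sum decomposition of $\agealgebra(R)$ as $a(s)+b(s)$ where $b(s)$ lives in the subalgebra generated by the merged symmetric data and every monomial in $a(s)$ is forced to meet the ``witnessing'' finite block that obstructs the merge in the full structure. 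The relation $a(s)^2=0$ in positive degree then shows that any finite homogeneous generating set $S$ would force the $\agealgebra(R)$ to be generated as a module over that subalgebra by $\{a(s):s\in S\}$, bounding its graded dimension by $|S|$ and contradicting the unbounded growth from Theorem~\ref{theorem.quasipolynomial}.

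The hard part will be making the $(a)\Rightarrow(c)$ direction precise, specifically verifying that hereditary minimality yields a genuine module-finiteness rather than just the correct leading-term cone. Lemma~\ref{lemma.addlayer} controls which leading monomials can be climbed to by multiplying by a full layer, but I must bootstrap this into a statement that \emph{every} leading monomial of $\agealgebra(R)$ decomposes as a product of a symmetric generator times one of finitely many ``residues'' of bounded degree. Establishing that the residues are finite in number is the crux: it amounts to showing that the non-fat directions, together with the finite blocks, contribute only finitely many leading monomials modulo the symmetric subalgebra, and this is precisely the combinatorial content that fails when minimality is not hereditary. I expect to control this by a careful induction on the number of blocks combined with the shape-stabilization threshold $d$, reducing the general case to finitely many boundary strata each handled by the bounded-profile Theorem~\ref{theorem.boundedProfile}.
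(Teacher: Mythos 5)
Your overall architecture --- the cycle $(a)\Rightarrow(c)\Rightarrow(b)\Rightarrow(a)$, with $(c)\Rightarrow(b)$ standard and $\neg(a)\Rightarrow\neg(b)$ modelled on Example~\ref{example.notfinitelygenerated} --- is the intended one (the paper itself only gives the key idea on that example and defers the rest), but two of your three steps have genuine gaps. The first is in $\neg(a)\Rightarrow\neg(b)$: you posit a ``witnessing \emph{finite} block'' that every set in the support of $a(s)$ must meet, whence $a(s)^2=0$. In general no such finite block exists. Take $R$ to be the graph on $E_1\cup E_2\cup E_3$, all three blocks infinite, whose edges are exactly the pairs meeting both $E_1$ and $E_3$: the minimal decomposition is $(E_1,E_2,E_3)$ and it is not hereditary minimal, since $E_1\cup E_2$ becomes monomorphic (an independent set) upon restriction; but the obstruction is witnessed only by points of the \emph{infinite} block $E_3$, and elements supported on sets meeting $E_3$ are not nilpotent (two disjoint sets can both meet $E_3$), so your dimension bound evaporates. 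The missing ideas are: restriction of $R$ to \emph{any} subset of $E$ induces a \emph{surjective} algebra morphism between age algebras, so finite generation is inherited by restrictions; one then restricts to $E_z\cup E_{z'}\cup C$, where $C$ is a \emph{minimal} finite set witnessing that $E_z\cup E_{z'}$ is not a monomorphic part of $R$; minimality of $C$ forces the restriction to have exactly two infinite blocks with union $E_z\cup E_{z'}$ and finite complement $C$; and finally one needs nilpotency of order $|C|+1$ (not $2$) of the kernel of the surjection onto $\agealgebra(R_{\restriction E_z\cup E_{z'}})\cong\K[u]$, before contradicting the growth bound of Lemma~\ref{lemma.minimalGrowthRate}. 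For the graph above, this repaired argument restricts to $E_1\cup E_2\cup\{c\}$ with $c\in E_3$ and recovers precisely Example~\ref{example.notfinitelygenerated}, which is why that example is ``prototypical.''

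The second gap is in $(a)\Rightarrow(c)$, and it is the one you partly flag yourself. Your ``dimension count'' is not a valid criterion: containing a polynomial algebra on $k=|X_\infty|$ generators while the profile grows like $n^{k-1}$ does \emph{not} imply module-finiteness. Example~\ref{example.notfinitelygenerated} itself refutes it: its age algebra contains the free polynomial algebra $\K[e_1,\theta]$, where $\theta$ is the class of the pairs $\{$center, one leaf$\}$, and its profile satisfies $\profile(n)=n\sim n^{k-1}$ with $k=2$, yet it is a finite module over no subalgebra at all, being non-finitely generated. Module-finiteness requires proving that $\agealgebra(R)$ modulo the ideal generated by the chosen parameters is finite dimensional (graded Nakayama), and that is exactly where Lemma~\ref{lemma.addlayer} and the leading-monomial analysis must carry the load --- the step your last paragraph defers rather than supplies. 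Moreover, your reading of (c) is too literal: even in hereditary minimal cases the symmetric power sums do not belong to the age algebra. For $K_\omega\oplus\overline K_\omega$ (hereditary minimal, finitely generated age algebra) one checks that $x_1^D+x_2^D$ is not an age-algebra element for any $D\geq 2$, because the class of an independent $D$-set necessarily carries the cross monomial $x_1x_2^{D-1}$; so under your interpretation the implication $(a)\Rightarrow(c)$ is actually false, and no argument could establish it. Condition (c) must be read as asserting the existence of age-algebra elements whose \emph{leading monomials} are the symmetric functions in the $x^D$, and your justification (``distinct shapes remain distinguishable, so the power sums are genuine age-algebra elements'') does not produce such elements.
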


The implication (c) $\Rightarrow$ (b) is immediate; we prove
separately (a) $\Rightarrow$ (c) and (b) $\Rightarrow$ (a).

For the former implication, the proof is based on the Stanley-Reisner
ring approach of~\cite{Garsia_Stanton.1984} to construct generators of
the invariant rings of a permutation group as a module over symmetric
functions (also dubbed chain-product trick by the second author),
together with the layer addition lemma used
in~\cite{Pouzet_Thiery.AgeAlgebra1} to prove that the Hilbert series
is a rational fraction. For this, we need the tools and notations of
Section~3 of~\cite{Pouzet_Thiery.AgeAlgebra1}, albeit with a small
variant on the chosen total order on the monomials of $\K[X]$; this
variant is necessary to handle the fact that hereditary minimality is
only about the infinite components $E_x$ of the relational structure.

Say for short that a variable $x\in X$ is \emph{finite} if $E_x$ is
finite. Write $\deg_{<\infty}(m)$ for the degree of a monomial $m\in
\K[X]$ in the finite variables. To compare two monomials $m$ and $m'$
in $\K[X]$, first compare their degree in the finite variables;
e.g. set $m>m'$ if $\deg_{<\infty}(m) < \deg_{<\infty}(m')$; in case
of tie, proceed as in~\cite{Pouzet_Thiery.AgeAlgebra1} by comparing
the shapes of $m$ and $m'$, and breaking ties with the usual
lexicographic order.  When there is no finite variable, nothing
changes. Leading monomials, chain support, etc are defined as
in~\cite{Pouzet_Thiery.AgeAlgebra1}. With this order on monomials,
Lemma~3.2 of~\cite{Pouzet_Thiery.AgeAlgebra1} becomes:
\begin{lemma}\label{lemma.addlayer}
  Let $m$ be a leading monomial, and $S\subseteq X$ be a layer of $m$
  with $S\subset X_\infty$. Then, $m x_S$ is again a leading monomial.
\end{lemma}
\begin{proof}
  Proceed as in the proof of Lemma~3.2
  of~\cite{Pouzet_Thiery.AgeAlgebra1}. Since we are considering only
  layers $S\subset X_\infty$, the tweak in the total order on
  monomials does not interfere.
\end{proof}

\begin{proof}[Proof of (a) $\Rightarrow$ (c) of Theorem~\ref{theorem.finiteGeneration}]
  \def\d{d} Consider the restriction $R'$ of $R$ on its infinite
  components. Since the monomorphic decomposition is hereditary
  minimal, the monomorphic decomposition $(E_x)_{x\in X_\infty}$
  remains minimal. Take $\d$ large enough as in Lemma~2.15
  of~\cite{Pouzet_Thiery.AgeAlgebra1}. By Corollary~2.16, for
  $i=1,\dots,k$, the collection $C$ of all sets of size $\d i$ and
  shape $(\d,\dots,\d)$ is closed under orbits. Identifying the
  elementary symmetric function $e_i(x^d, x\in X_\infty)$ in $\K[X]$
  with (a constant factor of) the sum of all the elements of $C$ in
  the set algebra, we derive that this symmetric function belongs to
  the age algebra of $R'$.

  Let $\overline C$ be the closure of $C$ under orbits in $R$, and let
  $\overline e_i\in \agealgebra(R)$ be the sum of the elements in
  $\overline C$. Note that $\overline e_i=e_i(x^\d, x\in X_\infty)$,
  up monomials containing variables $x$ not in $X_\infty$ and thus
  strictly smaller. It follows that $\K[\overline e_i, i=1,\dots,k]$
  is a triangular deformation of the ring of symmetric functions
  $\sym(x^\d, x\in X_\infty)$. In particular, it's the free
  commutative algebra generated by $(\overline e_i)_{i=1,\dots,k}$ in
  the age algebra.

  \noindent\textbf{Claim 1:} Let $m$ be a leading monomial with chain
  support $C$. Assume that $m=x_S^d m'$ for some leading monomial
  $m'$, $S\in C$, and $d\geq0$. Then,
  \begin{displaymath}
    \lm( o(m') \overline e_{|S|} ) = m\,;
  \end{displaymath}
  \begin{proof}
    We say for short that a variable $x\in X$ is \emph{finite} if
    $E_x$ is finite. Write $\deg_{<\infty}(c)$ for the degree of a
    monomial $c$ in the finite variables.

    Take a monomial $ab$ appearing in the product $o(m') \overline
    e_{|S|}$.  If $a=m'$ and $b=x_S^d$, then we recover $m$ and we are
    done.

    Note that $\deg_{<\infty}(a)\geq \deg_{<\infty}(m')$. If the
    comparison is strict or if $\deg_{<\infty}(b)>0$, then
    $\deg_{<\infty}(ab)> \deg_{<\infty}(m)$ and thus $ab<m$ as
    desired.

    Otherwise, $\deg_{<\infty}(a)\geq \deg_{<\infty}(m')$, and $b$ is
    of the form $x_{S'}^\d$ for some $S'\subseteq X^\infty$ with
    $|S'|=|S|$. Given that the shape of $a$ is at most that of $m'$,
    that the shape of $x_{S'}^d$ coincide with that of $x_{S}^d$, and
    that $S$ is in the chain support of $m'$, the shape of $ab$ is at
    most that of $m$. If equality does not holds, we are
    done. Otherwise, the shape of $a$ coincides with that of $m'$, and
    $S'$ is in the chain support of $m'$, and it follows that
    $ab\leq_\lex m$, and therefore $ab\leq m$, as desired.
  \end{proof}

  \def\genlm{G}

  Fix a chain $C:=\emptyset\subset S_1\subset\dots\subset S_r\subseteq
  X$, and let $\lm_C$ be the set of leading monomials of the age
  algebra with this chain support. As in Section~3.2
  of~\cite{Pouzet_Thiery.AgeAlgebra1}, consider
  $\ideal[J]:=\K.\lm_C\oplus \ideal$ of $\K[S_1,\dots,S_l]$; By
  Lemma~\ref{lemma.addlayer}, this is a monomial ideal. Dickson's
  Lemma states that $\ideal[J]$ is finitely generated as an ideal,
  that is as a module over $\K[S_1,\dots,S_l]$. It is in fact also
  finitely generated as a module over $\K[S_1^\d,\dots,S_l^\d]$, with
  a canonical finite set $\genlm_C$ of monomials as
  generators (see e.g. Lemma~2.3.2 (a) of~\cite{Sturmfels.AIT}).

  \def\gen{\mathcal G}

  Let $\gen$ be the finite collection of the orbitsums whose leading
  monomial is in $\genlm_C$ for some chain $C$. We conclude by proving
  the following claim.

  \noindent\textbf{Claim 2:} $\gen$ generates $\agealgebra$ as a module
  over $\K[\overline e_i]$.

  Take an orbitsum $o(m)$ in $\agealgebra$ with $m$ its leading
  monomial, and let $C$ be the chain support of $m$. Assume by
  induction that, for any leading monomial $m'<m$, the orbitsum
  $o(m')$ is in the $\K[\overline e_i]$-module spanned by $\gen$. We
  prove that this holds for $o(m)$ too.

  If $m$ is in $\genlm_C$, $o$ is in $\gen$ and we are done.
  Otherwise, write $m$ as $m=s_S^\d m'$ where $S\subset
  X_\infty$. Using Claim~1, $o(m)$ is in the $\K[\overline
  e_i]$-module spanned by $o(m')$ together with orbitsums of strictly
  smaller leading monomials. Therefore, by induction, $o(m)$ is in the
  $\K[\overline e_i]$-module spanned by $\gen$ as desired.
\end{proof}

\begin{proof}[Proof of (b) $\Rightarrow$ (a) of Theorem~\ref{theorem.finiteGeneration}]
  Assume that the monomorphic decomposition is not hereditary minimal
  but the age algebra $\agealgebra:=\agealgebra(R)$ is finitely generated.
  
  The proof follows the same path as in
  Example~\ref{example.notfinitelygenerated}: we first reduce the
  problem to the monomorphic dimension $2$ case. Then, we construct a
  subalgebra $\mathcal B$ of $\agealgebra$ which is "small" (dimension
  $1$ in each degree) compared to $\agealgebra$ (dimension
  asymptotically equivalent to $d$ in degree $d$).  Using the fact
  that $\agealgebra$ is finitely generated as an algebra we prove that
  $\agealgebra$ is a finitely generated module over $\mathcal B$. This is
  impossible dimension-wise.

  Claim 1: we may assume without loss of generality that $R$ is of
  monomorphic dimension $2$. Otherwise, consider two monomorphic
  parts $E_1$ and $E_2$ such that $R$ restricted to $H=E_1\cup E_2$
  is monomorphic. By the minimality of the decomposition, $H$ is not a
  monomorphic part of $R$: there exist two finite subsets $A$ and $A'$
  of $E$ such that $A$ and $A'$ are not isomorphic yet coincide
  outside $H$. Let $G$ be the finite subset
  $A \backslash H=A'\backslash H$. Then, $R$ restricted to $H\cup G$
  is of monomorphic dimension $2$. The age algebra of the restriction
  is a quotient of the age algebra of $R$ and is therefore still
  finitely generated, as desired.

  Keeping the above notations, we now have $E=H\cup G$ where
  $H=E_1\cup E_2$.
  Consider the graded subalgebra $\mathcal B$ of $\agealgebra$
  spanned in each degree $d$ by the sum $e_d(E)$ of all subsets of
  size $d$ ($\mathcal B$ can be alternatively defined as the free
  graded commutative algebra generated by the sum of all points
  $e_1(E)$, or the age algebra of the trivial relational structure on
  $E$).
  
  Claim~2: $\agealgebra$ is a finite-module over $\mathcal B$. This
  yields the desired contradiction because the graded dimension of
  such a module is bounded by a constant, whereas by
  Lemma~2.15 of~\cite{Pouzet_Thiery.AgeAlgebra1}, the graded dimension of
  $\agealgebra$ is asymptotically equivalent to $d$.
  
  It remains to prove Claim~2.
  
  Since the restriction of $R$ to $H$ is monomorphic, any orbitsum of
  degree $d$ either contains all subsets of size $d$ of $H$, or none.
  Therefore, any $s$ in the age algebra decomposes uniquely as
  $s=:a(s) + b(s)$, where $b(s)$ is in $\mathcal B$, and all the
  subsets in the support of $a(s)$ intersects $G$ non trivially.
  
  Key fact: any product $a(s_1)\cdots a(s_k)$ of $k>|G|$ homogeneous
  elements $s_i$ of positive degree is zero ($k>|G|$ subsets of $E$
  which intersect $G$ non trivially cannot be disjoint).
  
  Let $S$ be a finite generating set of the age algebra made of
  homogeneous elements of positive degree. Then, $\{a(s), s\in S\}$
  generates $\agealgebra$ as a $\mathcal B$-algebra. By the above key fact
  the finite collection of all products $a(s_1)\cdots a(s_k)$ of
  $k\leq|G|$ elements of $S$ generates $\agealgebra$ as a $\mathcal
  B$-module.
\end{proof}

\subsection{Finite generation for tournaments}

The existence of a very simple tournament (Example~A.8
of~\cite{Pouzet_Thiery.AgeAlgebra1}) %
whose age algebra is not finitely generated is not an accident; in
fact, the age algebra of a tournament is very seldom finitely
generated.
\begin{theorem}
  \label{theorem.finiteGenerationTournament}
  The age algebra of a tournament $T$ is finitely generated if and
  only if the profile is bounded.
\end{theorem}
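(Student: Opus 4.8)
The plan is to combine the two characterizations already available in this section: the description of finite generation in Theorem~\ref{theorem.finiteGeneration} (finite generation is equivalent to hereditary minimality of the monomorphic decomposition) and the description of bounded profiles in Theorem~\ref{theorem.boundedProfile} (a bounded profile is equivalent to having at most one infinite block). The whole argument then rests on a rigidity peculiar to tournaments: two infinite monomorphic blocks can never stay apart once the rest of the structure is forgotten.

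The \emph{easy} implication is immediate: if $\profile_T$ is bounded, Theorem~\ref{theorem.boundedProfile} already asserts that $\agealgebra(T)$ is a free module of finite type over $\K[e_1]$, hence finitely generated. For the converse, I would assume $\agealgebra(T)$ finitely generated. Then its Hilbert series has the shape of Equation~\ref{equation.fraction.finitelyGenerated}, so $\profile_T$ has polynomial growth; invoking the structure theory of tournaments of polynomial growth from~\cite{Boudabbous_Pouzet.2007}, $T$ admits a \emph{finite} monomorphic decomposition. Let $(E_x)_{x\in X}$ be the minimal one and $X_\infty$ the set of infinite blocks. By Theorem~\ref{theorem.boundedProfile} it suffices to prove $|X_\infty|\le 1$, and by Theorem~\ref{theorem.finiteGeneration} we already know that $(E_x)_{x\in X}$ is hereditary minimal; so everything reduces to a single combinatorial claim.

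The crux is to show that, in a tournament, the restriction to the union $E_i\cup E_j$ of any two infinite monomorphic blocks is a transitive tournament, hence a single monomorphic block; consequently $(E_i,E_j)$ is not minimal for $T_{|E_i\cup E_j}$, so as soon as $|X_\infty|\ge 2$ the decomposition fails to be hereditary minimal, a contradiction. I would prove this in three steps, all turning on the elementary fact that a tournament on at least four vertices cannot have all its $3$-element restrictions cyclic (some vertex has out-degree at least two, and the triangle it dominates is transitive). First, each infinite block induces a monomorphic tournament, so all its $3$-subsets share one isomorphism type; that type cannot be the $3$-cycle, whence it is the transitive triangle and $T_{|E_x}$ is a linear order. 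Second, since restricting a monomorphic decomposition to a union of blocks yields again a monomorphic decomposition, all triples of $E_i\cup E_j$ with intersection pattern $\dvect=(2,1)$ are mutually isomorphic, and likewise for $(1,2)$; their common type cannot be the $3$-cycle, because picking three comparable points $a_1<a_2<a_3$ in the block contributing two of the three vertices would force both $a_2\to b$ and $b\to a_2$ for the third vertex $b$. So these mixed triples are transitive as well. Third, every $3$-subset of $E_i\cup E_j$ has pattern $(3,0)$, $(2,1)$, $(1,2)$ or $(0,3)$, none of which is a $3$-cycle, and a tournament without a cyclic triangle is transitive; hence $T_{|E_i\cup E_j}$ is a linear order.

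With the claim in hand the assembly is short: a tournament whose age algebra is finitely generated cannot carry two infinite monomorphic blocks, so $|X_\infty|\le 1$, and Theorem~\ref{theorem.boundedProfile} turns this into the boundedness of $\profile_T$. The main obstacle is the crux claim, and within it the second step, where one must pin down that the uniform type forced on the mixed $(2,1)$- and $(1,2)$-triples is the transitive one; the convenient route is not to analyse the bipartite orientation between $E_i$ and $E_j$ directly, but merely to \emph{exclude} the cyclic type and let the no-cyclic-triangle criterion finish the job. A secondary point to handle with care is the initial reduction: finite generation must, through polynomial growth of the profile, actually guarantee a finite monomorphic decomposition for tournaments, which is precisely where~\cite{Boudabbous_Pouzet.2007} is needed.
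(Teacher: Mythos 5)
Your proof is correct and follows essentially the same route as the paper's: Theorem~\ref{theorem.boundedProfile} for the easy direction, and for the converse the chain finite generation $\Rightarrow$ polynomial growth of $\profile_T$ $\Rightarrow$ (Boudabbous--Pouzet, Theorem~\ref{theorem.tournamentPolynomialGrowth}) finite minimal monomorphic decomposition $\Rightarrow$ (Theorem~\ref{theorem.finiteGeneration}) hereditary minimality $\Rightarrow$ at most one infinite block $\Rightarrow$ bounded profile. The only difference is that your ``crux claim'' is exactly Remark~\ref{remark.tournamentMonomorphicDecomposition}, which the paper states without proof; your three-step argument for it (each infinite block is a linear order, mixed $(2,1)$-triples cannot be cyclic, a tournament with no cyclic triangle is transitive) is a correct filling-in of that gap.
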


This is a consequence of Theorem~\ref{theorem.finiteGeneration} thanks
to some simple remarks and a structural theorem on the
monomorphic parts of a tournament.
\begin{remarks} \label{remark.tournamentMonomorphicDecomposition}
  \begin{enumerate}[(a)]
  \item A monomorphic part of size at least $4$ is acyclic;
  \item The union of two monomorphic parts of size at least $4$ is
    acyclic;
  \item A minimal monomorphic decomposition of a tournament with
    at least two infinite parts cannot be hereditary minimal.
  \end{enumerate}
\end{remarks}
\begin{theorem}[Boudabbous-Pouzet~\cite{Boudabbous_Pouzet.2009}]
  \label{theorem.tournamentPolynomialGrowth}
  Let $T$ be an infinite tournament whose profile is bounded above by
  a polynomial. Then, $T$ is a lexicographical sum $\sum_{i\in D} A_i$
  of acyclic tournaments $A_i$ indexed by a finite tournament $D$.
\end{theorem}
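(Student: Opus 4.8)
The plan is to reduce the statement to the existence of a \emph{finite} minimal monomorphic decomposition of $T$, and then to read off the lexicographical structure from the acyclicity properties recorded in Remark~\ref{remark.tournamentMonomorphicDecomposition}. The converse of the reduction is the easy direction: by Theorem~\ref{theorem.quasipolynomial} a finite monomorphic decomposition already forces polynomial growth (of order $n^{k-1}$), so what must be shown is that polynomial growth \emph{precludes} an infinite minimal decomposition. I would argue the contrapositive with the help of Ramsey's theorem: from an infinite minimal decomposition one extracts, through Ramsey applied to the $2$-colouring of pairs given by arc orientation, arbitrarily large families of pairwise non-interchangeable vertices, and uses them to produce more pairwise non-isomorphic $n$-element restrictions than any polynomial allows, so that the dichotomy of Theorem~\ref{profilpouzet1} places the growth above every polynomial. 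I expect this reduction to be the main obstacle: the delicate point is to show that infinitely many blocks genuinely contribute independent degrees of freedom to the profile, and it is precisely here that the structural input of Boudabbous--Pouzet is required.

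Granting a finite minimal decomposition $(E_x)_{x\in X}$ with $X$ finite, the next step is to recognise each block as an acyclic piece and to organise the blocks into a lexicographical sum. By Remark~\ref{remark.tournamentMonomorphicDecomposition}(a) every infinite block, having size at least $4$, is acyclic, hence a transitive tournament; the finitely many finite blocks I would split into singletons, which are trivially acyclic, so that all the pieces $A_i$ of the statement are acyclic at the cost of increasing their number by a bounded amount. It then remains to construct the index tournament $D$ on the set of pieces, which amounts to proving that the arcs between two distinct pieces all point the same way, i.e.\ that each piece is a \emph{module} of $T$.

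For two \emph{infinite} blocks $E_x$ and $E_y$ the uniformity follows cleanly from Remark~\ref{remark.tournamentMonomorphicDecomposition}(b): were there $a,b\in E_x$ and $v\in E_y$ with $v\to a$ and $b\to v$, the triangle $\{a,b,v\}$ would carry a $3$-cycle, contradicting the acyclicity of $E_x\cup E_y$; hence the orientation between the two blocks is constant and contracting them yields a well-defined arc of $D$. The connections involving the bounded exceptional part (the former finite blocks, now singletons) I would treat separately, using the monomorphicity of an infinite block to rule out an exceptional vertex having both an in-neighbour and an out-neighbour inside it without disturbing the invariance of isomorphism type under interchange; this again fixes the orientation up to finitely many exceptions, all of which are absorbed into the finite quotient $D$. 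Once every piece is shown to be a module and $D$ is seen to be a finite tournament, $T=\sum_{i\in D}A_i$ is by definition the lexicographical sum of the acyclic tournaments $A_i$, which is the desired conclusion.
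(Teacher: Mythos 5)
This theorem is not proved in the paper at all: it is quoted as an external result of Boudabbous and Pouzet~\cite{Boudabbous_Pouzet.2007} and used as a black box in the proof of Theorem~\ref{theorem.finiteGenerationTournament}. Your proposal therefore has to stand on its own, and it does not. The central reduction --- that polynomial growth of $\profile_T$ forces the minimal monomorphic decomposition of $T$ to be finite --- is precisely the hard content of the theorem, and you do not prove it: you explicitly defer to ``the structural input of Boudabbous--Pouzet'' at exactly this point, which makes the argument circular. Nor can the Ramsey sketch be completed as stated, because the implication ``infinitely many minimal monomorphic blocks $\Rightarrow$ growth faster than every polynomial'' is false for general relational structures: the infinite matching of Example~\ref{example.QwreathG}, and the wheel-plus-matching graph of Examples~\ref{example.notFinitelyGeneratedInfiniteDecomposition}, have infinitely many blocks in their minimal decomposition yet profiles of polynomial growth. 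Vertices in distinct blocks need not be ``non-interchangeable'' in any way visible to a $2$-colouring of pairs (in the matching they are exchanged by automorphisms), so a correct argument must use tournament structure in an essential, quantitative way; the proposal contains no such step, and this is exactly where the work of~\cite{Boudabbous_Pouzet.2007} lies.

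The second half also has a genuine error: the claim that Remark~\ref{remark.tournamentMonomorphicDecomposition}~(b) makes each infinite block a module. From $v\to a$ and $b\to v$, the triangle $\{a,b,v\}$ is a $3$-cycle only if in addition $a\to b$; if $b\to a$ it is transitive. Acyclicity of $E_x\cup E_y$ therefore does not force all arcs between $E_x$ and $E_y$ to point the same way: two infinite sets interleaved inside a single linear order have an acyclic union while arcs cross in both directions. Your argument never invokes minimality of the decomposition at this step, and without it the conclusion fails. Worse, Remark~\ref{remark.tournamentMonomorphicDecomposition}~(c) shows that two infinite blocks collapse into a single block upon restriction to their union, so the blocks of the minimal monomorphic decomposition cannot simply be identified with the modules $A_i$ of the lexicographical sum; recovering the $A_i$ requires an argument about how the rest of $T$ distinguishes $E_x$ from $E_y$, which the purely local triangle argument cannot supply.
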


\begin{proof}[Proof of Theorem~\ref{theorem.finiteGenerationTournament}]
  Suppose that $\profile_T$ is bounded. Then, by
  Theorem~\ref{theorem.boundedProfile}, $\agealgebra(T)$ is finitely
  generated. Conversely, suppose that $\agealgebra(T)$ is finitely
  generated.  Then, the profile $\profile_T$ has polynomial growth,
  and by Theorem~\ref{theorem.tournamentPolynomialGrowth} the minimal
  monomorphic decomposition of $T$
  is finite. Applying Theorem~\ref{theorem.finiteGeneration}, this
  decomposition is hereditary minimal, and by
  Remark~\ref{remark.tournamentMonomorphicDecomposition}~(c), it has a
  single infinite monomorphic maximal part. Therefore, the profile is
  bounded.
\end{proof}

Theorem~\ref{theorem.finiteGenerationTournament} was stated as
Theorem~3.5 in~\cite{Pouzet.2006.SurveyProfile}. The argument provided
there expands on the idea given in
Example~\ref{example.notfinitelygenerated} but, as stated, is
incorrect. The correction is straightforward. Assume that the
tournament $T$ is a lexicographical sum $T=\sum_{i\in D} A_i$ of
acyclic tournaments -- indexed by a finite tournament $D$ -- at least
two of which, $A_j$, $A_k$, are infinite. Then, according to Lemma~9
of~\cite{Boudabbous_Pouzet.2009} $T$ contains a sub-tournament
$T'=\sum_{i\in D'} A'_i$ with the same property where $D'$ has at most
$5$ elements and all $A'_i$ but two, say $A'_{j'}$ and $A'_{k'}$, are
singletons. Supposing $D'$ with minimum size (hence between $3$ and
$5$), the union of the $A'_{i'}$'s, for $i'\not \in \{ j', k'\}$, is
the kernel $\kernel (T')$ of $T'$. As stated in Theorem~3.5, if
$\agealgebra(T)$ is finitely generated, $\agealgebra(T')$ is finitely
generated. Then we may repeat the proof given in
Example~\ref{example.notfinitelygenerated} with $\kernel (T')$ playing
the role of $\{c\}$ and obtain a contradiction (the proof of
Theorem~3.5 deals only with $D$ of size $3$, in which case
$\kernel (T')$ is a singleton). %

\section{Invariant rings of permutation groupoids}
\label{section.invariant.groupoids}

\def\chain{K_\infty}

The common point of invariant rings of permutation groups and other
interesting examples like the rings of quasi-symmetric polynomials
(Example~A.18 of~\cite{Pouzet_Thiery.AgeAlgebra1}) is that they can be
realized as age algebras of a relational structure $R$ of the form
$\chain \wr (X,\rho_i)$, where $X$ is a finite set and $\chain$ is an
infinite clique (or similar monomorphic relational structure). In this section,
we study further such age algebras which we call \emph{invariant rings
  of permutation groupoids}.

Our motivations are twofold. On one hand, relate, in this simpler yet
rich setting, the properties of the profile to algebraic properties of
the invariant ring. In particular, find conditions under which the
invariant ring is Cohen-Macaulay.  On the other hand, generalize the
theory, algorithms, and techniques of invariant rings of permutation
groups to a larger class of subrings of $\K[X]$.

\subsection{Permutation groupoids, inverse monoids and representations}
\label{section.groupoids}

In this section, we briefly review the groupoid and inverse monoid
structures on the local isomorphism of a relational structure, and
their representations. For details, see
e.g.~\cite[Chapter 4]{Lawson.1998.InverseSemigroups}, \cite{Steinberg.2006.Moebius}.

Let $G:=\loc(X,\rho_i)$ be the collection of local isomorphisms of a
finite relational structure $(X, \rho_i)$. Recall that it can be
endowed with a groupoid structure by composing two local isomorphisms
$f$ and $g$ whenever the codomain of the first agrees with the domain
$\dom g$ of the second.

The properties of $G$ are as follows:
\begin{itemize}
\item $G$ contains all local identities of $X$;
\item $G$ is stable under composition, inverse, and restrictions.
\end{itemize}

We call \emph{permutation groupoid} on a set $X$ a collection of local
bijections of $X$ satisfying the above properties, endowed with the
composition product.
\begin{proposition}
  Any permutation groupoid on a finite set $X$ can be realized as
  groupoid of local isomorphisms on some relational structure on $X$.

  A permutation group on $X$ induces a permutation groupoid on $X$ by
  considering all the restrictions of its automorphisms.
\end{proposition}

Alternatively, $G$ can be endowed with an inverse monoid structure,
which we denote by $\overline G$ by taking the partial composition as
product:
\begin{displaymath}
  f g:
  \begin{cases}
    g^{-1}(\im g \cap \dom f) &\bijection f(\im g \cap \dom g)\\
    x \mapsto f(g(x))
  \end{cases}\ .
\end{displaymath}

The groupoid and inverse monoid structures are tightly related through
their algebras. Define as usual the groupoid algebra $\K.G$ of $G$ as
the vector space of formal linear combinations of $G$, endowed with
the product obtained by extending by bilinearity the groupoid product
of $G$.

Define similarly the monoid algebra of $G$ starting from its inverse
monoid structure. The latter algebra is isomorphic to $\K.G$, by
mapping a partial bijection $f$ to
$\overline f = \sum_{A\subset \dom f} f_{\restriction A}$ in $\K.G$,
where $A$ is the domain of $f$. The inverse isomorphism can be defined
by inclusion-exclusion. In the sequel, we identify both algebras,
interpreting $(\overline f)_{f\in G}$ as an alternative basis of
$\K.G$. The map $f \mapsto \overline f$ also defines an embedding of
$\overline G$ in $\K.G$.

Because of this isomorphism, the representations of $G$ and
$\overline G$ coincide. In particular, they are semi-simple.

\subsection{Invariant rings of permutation groupoids}

Take a relational structure $R$ of the form $\chain \wr (X,\rho_i)$.
The running example to keep in mind is that of quasi-symmetric
functions (Example~A.18 of~\cite{Pouzet_Thiery.AgeAlgebra1}). The
monomorphic decomposition $(E_x:=\chain\times \{x\})_{x\in X}$ of $R$
is minimal, and even hereditary minimal. This makes the age algebra
into a subring of $\K[X]$.

We extend the natural action of a permutation of $X$ on $\K[X]$ as
follows. Take a monomial $X^\dvect$ and a local bijection $f$.  If the
\emph{support} of $X^\dvect$ (that is: $\{x\in X \suchthat d_x>0\}$)
coincides with the domain of $f$, set $f.X^\dvect := \prod_{x, d_x>0}
f(x)^{d_x}$; otherwise set $f.X^\dvect:=0$.

\begin{lemma}
  Let $A$ and $B$ be two finite isomorphic subsets of $R$. Then, there
  exists a local isomorphism $f$ of $(X,\rho_i)$ such that $X^B =
  f.X^A$. In particular, the shape of $A$ and $B$ are identical.
\end{lemma}
\begin{proof}
  Let $g$ be a local isomorphism from $A$ to $B$. Two points of $A$ in
  the same monomorphic part must be sent by $g$ to the same
  monomorphic part. Therefore, there exists a local isomorphism $f$
  of $(X,\rho_i)$, and local isomorphisms $(f_x)_{x\in X}$ of $\chain$
  such that every point $(j, x)$ of $A$ is mapped to $g(j,x) =
  (f_x(j), f(x))$.
\end{proof}

This proposition should really be interpreted as follows. The
monomorphic parts are strings, and the finite subsets of $E$ are sets
of beads threaded on those strings and sliding freely on them.
Furthermore some local permutations of the non empty strings are
allowed, corresponding to the local isomorphisms of $(X,\rho_i)$. For
example, for quasi-symmetric functions, one may move the string $1$ to
the string $3$, and the string $2$ to the string $4$, assuming that
all other strings contain no beads.

\begin{corollary}
  The age and therefore the age algebra depend only on the groupoid
  $G$ of the local isomorphisms of $(X, \rho_i)$. Reciprocally, the
  age algebra characterizes the groupoid.
\end{corollary}
By analogy with invariant rings of permutation group, we therefore
write it $\K[X]^G$, and call it \emph{invariant ring of the
  permutation groupoid} $G$. If $G$ is induced by a permutation group,
then both of their invariant rings coincide.

The basic notions of invariant rings of permutation groups, like
\emph{$G$-isomorphic monomials} and \emph{$G$-orbits} extend
straightforwardly; in particular one may define the \emph{orbit sum}
$O(X^D)$ of a monomial $X^D$ as the sum of all the monomials in its
orbit.

\begin{proposition}
  The orbitsums form a vector space basis of the invariant ring
  $\K[X]^G$. The later is a graded connected commutative algebra which
  contains symmetric functions in $X$.
\end{proposition}

\begin{remark}
  The action being by local permutation, the isomorphism of two
  monomials does not depend on the actual values of the exponents, but
  only on the partition of $X$ induced by them.

  Formally: let $f$ be a function from $\N$ to $\N$ such that
  $f(0)=0$, and define $f(X^D):=\prod_x x^{f(D_x)}$. Then, if $X^D$ and
  $X^{D'}$ are isomorphic then so are $f(X^D)$ and $f(X^{D'})$.
\end{remark}

In this setting, Lemma~3.2 of~\cite{Pouzet_Thiery.AgeAlgebra1} becomes
immediate.
\begin{corollary}
  Consider the lexicographic monomial order. If $X^D$ is a leading
  monomial in $\K[X]^G$, and if $S\subseteq X$ is a layer of $X^D$, then
  $X^D X_S$ is again a leading monomial.
\end{corollary}
\begin{proof}
  Duplicating a layer in $X^D$ amounts to apply a strictly increasing
  function $f$. This function preserves both isomorphism and the
  monomial order.
\end{proof}

\subsection{Permutation groupoids versus permutation groups}

Here, we discuss briefly how the action of a permutation groupoid on
polynomials differs from that of a permutation group.

\subsubsection{Restriction}

The \emph{restriction} $G_{\restriction X'}$ of a permutation groupoid
$G$ to a subset $X'$ is the set of all local functions $f$ in $G$ such
that $\dom f\subseteq X'$ and $\im f \subseteq X'$, which is again a
permutation groupoid.  Furthermore, the orbits of
monomials in $\K[X']$ are unchanged by this restriction. In particular, the
invariant ring of $G_{\restriction X'}$ is simply the quotient of the
invariant ring of $G$ obtained by killing all the variables $x_i$ with
$i\not\in X'$.
This simple fact is one of the motivations for considering permutation
groupoids instead of just permutation groups (for which the
restriction to a subset is not clearly defined). This may indeed give
opportunities for induction techniques on the size of the underlying
set.
\begin{proposition}
  Any permutation groupoid on a finite set is the restriction of a permutation
  groupoid induced by a permutation group of some superset. However,
  this superset may need to be infinite.
\end{proposition}
\begin{proof}
  This is an immediate consequence of the fact that every finite
  relational structure $R$ embeds into a countable homogeneous
  structure (structure for which local isomorphisms of finite domain
  extend to automorphisms)~\cite{Fraisse.1954-2}. In several
  instances, the permutation groupoid may be chosen
  finite~\cite{Herwig_Lascar.2000}.
\end{proof}

\begin{examples}
  \label{examples.restrictions}
  (a) The permutation groupoid on $\{1,2,3\}$ generated by the
  rank $1$ local bijection $1\mapsto 2$ is the restriction of the
  permutation group on $\{1,2,3,4\}$ generated by the permutation
  $(1,2)(3,4)$.

  (b) The local automorphism permutation groupoid of the chain $a<b$
  is the restriction of the cyclic group $C_3$ on $\{a,b,c\}$.
  
  (c) Consider a relational structure $R$ such that there exists three
  elements $a,b,c$ and a binary relation $<$ which restricts on
  $\{a,b,c\}$ to the chain $a<b<c$. Typically, $R$ is a chain of
  length at least $3$ (giving $\qsym(X)$ as invariants) or a poset of
  height at least $3$. Then, there exists no relational structure
  $\overline R$ on a finite superset where all local isomorphisms
  extend to global isomorphisms.
  
  Consider indeed the local isomorphism $a\mapsto a, c\mapsto b$, and
  extend it to a global isomorphism $\sigma$ of $\overline R$. It is
  easy to check that $a<\sigma(b)<b<c$ is again a chain, which implies
  that $\sigma(b)\not\in \{a,b,c\}$. By induction,
  $a<\sigma^k(b)<\dots<\sigma(b)<c$ is again a chain, which proves
  that all $\sigma^k(b)$ are distinct. Hence $R'$ is infinite.
\end{examples}

\subsubsection{Multiplicativity}

As for a permutation group, the groupoid algebra $\K.G$ is semi-simple,
and the action of $G$ extends to a linear representation of $G$. However,
for a permutation groupoid, the action is not multiplicative on
polynomials. Take for example $f:=\id_{\{1,2\}}$, $P:=x_1$ and
$Q:=x_2$. This requires a bit of care in the upcoming generalization
of the Reynolds operator and explains why it is not any more a
$\sym$-module morphism.

Multiplicativity can be partially recovered by considering the inverse
monoid $\overline G$, whose on polynomials is given by
$\overline f.X^D := \prod_{x, D_x>0} f(x)^{D_x}$ if
$\{x, D_x>0\} \subseteq \dom f$ and $f.X^D:=0$ otherwise. This action
is multiplicative.

\subsection{The Reynolds operator}

The first essential feature of invariant rings is the so-called
\emph{Reynolds operator} $\reynolds$, a projector on the invariant ring.
The following proposition states that this operator still exists for
invariants of permutation groupoids, albeit missing the important
property of being a $\K[X]^G$-module morphism. In particular, although
$\K[X]^G$ still contains the ring of symmetric polynomials $\sym(X)$,
$R$ is not anymore a $\sym(X)$-module morphism.
Recall also that, for a permutation group, $\K[X]^G$ is the isotopic
component for the trivial representation of $\K.G$ in $\K[X]$;
furthermore, the Reynolds operator is the unique central idempotent of
$\K.G$ projecting on the trivial representation. These properties
fail for a permutation groupoid; in fact $\K[X]$ is not even stable
under the action by $\K.G$.

\begin{proposition}\label{proposition.groupoids.reynolds}
  There exists an idempotent $\reynolds$ in the groupoid algebra $\K.G$ which projects $\K[X]$
  onto the invariant ring $\K[X]^G$:
  \begin{displaymath}
    \reynolds:=\sum_{A\subseteq X} \frac{1}{|\{g\in G\suchthat \dom g = A\}|} \sum_{g\in G\suchthat \dom g = A} g\ .
  \end{displaymath}

  Furthermore, the four following conditions are equivalent:
  \begin{enumerate}[(i)]
  \item $G$ is induced by a permutation group.
  \item $\reynolds$ is a $\K[X]^G$-module morphism;
  \item $\reynolds$ is a $\sym(X)$-module morphism;
  \item $\ker \reynolds$ is a $\sym$-module;
  \end{enumerate}
\end{proposition}

\begin{proof}
  By construction, $\reynolds$ is in the groupoid algebra. One easily
  checks that, up to a scalar factor, the image of a monomial by
  $\reynolds$ is the orbitsum of that monomial, and the image of an
  orbitsum is itself. Therefore, it projects onto the invariant ring.

  (i) $\Rightarrow$ (ii), (ii) $\Rightarrow$ (iii), (iii)
  $\Rightarrow$ (iv) are well known or obvious.

  (iv) $\Rightarrow$ (i): Assume that $\ker R$ is a $\sym$-module and
  that $G$ does not come from a permutation group, and let $f:A\mapsto
  B$ be a local bijection which does not extend to a permutation of
  $X$. Let $x_X$ be the product of the variables, and $m$ be a
  monomial with support $A$ with all exponents distinct. Then, $f.m$
  is in the orbit of $m$, whereas $x_X f.m$ is not in the orbit of
  $x_Xm$. Therefore, $\reynolds(m-f.m)=0$ whereas $x_X\reynolds(m-f.m)
  = \reynolds(x_Xm - x_Xf.m)\ne 0$, a contradiction.
\end{proof}

\subsection{Fine grading, the chain product, and degree bounds}

Recall that the \emph{degree bound} $\beta(A)$ of a finitely generated
graded algebra $A$ is the smallest integer such that $A$ is generated
by its elements of degree at most $\beta(A)$.
In~\cite{Garsia_Stanton.1984}, Garsia and Stanton construct an
associated finely-graded algebra of the invariant ring $\K[X]^G$ of a permutation
group $G$ by letting $G$ act on the Stanley-Reisner ring of the
boolean lattice. They use it to exhibit $\sym(X)$-module generators in some
special cases and prove, in general, the quadratic degree bound
$\beta(\K[X]^G)\leq\binom{|X|}2$. A notable feature is that this
approach is combinatorial and thus characteristic free!

In this section, we show that this construction generalizes
essentially straightforwardly to permutation groupoids, and derive
some properties of the invariant ring: degree bounds and finite
generation; in the following section we further derive a necessary
conditions for being Cohen-Macaulay.

As in~\cite{Thiery.AIG.2000}, we follow the basic approach of
realizing the Stanley-Reisner ring as $\K[X]$ endowed with another
product $\star$, called the \emph{chain product}. The chain product
preserves a finer grading, and many algebraic properties of the
invariant ring w.r.t. the chain product transfer back to the usual
product. We refer to \cite{Eisenbud.CA} for background on filtrations
and associated graded algebras.

Given a subset $S$ of $X$, set $x_S:=\prod_{i\in S} x_i$. By
square-free decomposition, any monomial $x^d$ can be identified
uniquely with a \emph{multichain} $S_1\subseteq \dots \subseteq S_k$
of nested subsets of $X$, so that:
\begin{displaymath}
  x^d = x_{S_1} \cdots x_{S_k}\ .
\end{displaymath}
We call each $S_k$ a \emph{layer} of $x$.

The \emph{fine degree} of the monomial $x^d$ is the integer vector
$(r_1,\dots,r_n)$ where each $r_i$ counts the (possibly null) number
of repetitions of the layer of size $i$ of $x^d$. Orbit sums are
homogeneous w.r.t. the fine degree; the invariant ring is therefore
graded as a vector space.

One may compute the fine Hilbert series of the invariant ring as
follows: take a representative $Y$ for each orbit of subsets of $X$;
consider the group $G_Y$ of local automorphisms of domain $Y$; use
Pólya enumeration to compute the fine generating series for monomials
with full support $Y$. Sum all the results.

The fine grading is not preserved by multiplication; however it still
defines a filtration on $\K[X]$.
The \emph{chain product} $\star$ of two monomials $x^d := x_{S_1}
\cdots x_{S_k}$ and $x^{d'} := x_{S_1'} \cdots x_{S_k'}$ is defined
by:
\begin{displaymath}
  x^d \star x^{d'} :=
  \begin{cases}
    x^d x^{d'} &
    \text{ if $\{S_1, \dots, S_k, S'_1,\dots,S'_k\}$ is again a
      multichain of subsets,}\\
    0 & \text{otherwise.}
  \end{cases}
\end{displaymath}
For example, $x_1 \star x_1 = x_1^2$, $x_1 \star x_2=0$, $x_1 x_3^2 \star
x_1x_2x_3^2= x_1^2x_2x_3^4$, and $x_1 x_3^2 \star x_1x_2=0$.

The chain product endows $\K[X]$ with a second algebra structure $(\K[X],\star)$, isomorphic to the quotient
\begin{displaymath}
  \K[x_S, S\subseteq X] / \{x_Sx_{S'}=0\suchthat S\not\subseteq S' \text{ and } S'\not\subseteq S\}\,.
\end{displaymath}
$(\K[X],\star)$ is
also finely graded, fine degrees being added term-by-term.  In fact,
$(\K[X],\star)$ is exactly the associated graded algebra of $\K[X]$ w.r.t.
the fine degree filtration. Beware however that $(\K[X],\star)$
is not an integral domain.

The elementary symmetric functions
\begin{displaymath}
  e_d := \sum_{S\subseteq X, |S|=d} x_S
\end{displaymath}
are still algebraically independent and generate $(\sym(X)_n,\star)$.
This is no longer true for, say, the symmetric powersums. The
following simple fact turns out to be an essential key:
\begin{remark}
  Consider the chain product of a monomial $x_{S_1} \cdots x_{S_k}$ by
  the elementary symmetric function $e_d$. It is the sum of all
  monomials $x_{S_1} \cdots x_S \cdots x_{S_k}$, where $S$ is of size
  $k$, and fits in the multichain $S_1\subseteq\cdots \subseteq
  S\subseteq\cdots\subseteq S_k$. In particular, if $x_{S_1} \cdots
  x_{S_k}$ contains a layer $S$ of size $k$, then $x_{S_1} \cdots
  x_{S_k} \star e_k$ is the unique monomial obtained by replicating
  this layer.
\end{remark}

More generally, $(\K[X]^G,\star)$ is a subring of $(\K[X],\star)$. In
particular, $(\K[X]^G,\star)$ is a $\sym(X)$-module. Furthermore, we may
transfer the following algebraic properties from $(\K[X],\star)$ to
$\K[X]^G$, as in the case of permutation
groups~\cite{Garsia_Stanton.1984}.
\begin{proposition}
  \label{proposition.transfer}
  \begin{itemize}
  \item[(a)] A family $F$ of finely homogeneous invariants of positive
    degree which generates $(\K[X]^G,\star)$, also generates $\K[X]^G$;
  \item[(b)] $\beta(\K[X]^G,\star) \geq \beta(\K[X]^G)$;
  \item[(c)] A family $F$ of finely homogeneous invariants which
    generates $(\K[X]^G,\star)$ as a $\sym(X)$-module also generates
    $\K[X]^G$ as a $\sym(X)$-module;
  \item[(d)] If $(\K[X]^G,\star)$ is a free $\sym(X)$-module, then so is
    $\K[X]^G$.
  \end{itemize}
\end{proposition}
\begin{proof}
  This is a standard fact about filtrations and associated graded
  connected algebras. The key of the proof is that, if $p$ and $q$ are
  finely homogeneous, the maximal finely homogeneous component of $pq$
  is exactly $p\star q$. (a) and (c) follow by induction over the fine
  grading.
  Then, (b) follows straightaway from (a), and (d) from (c) by a
  simple Hilbert series argument.
\end{proof}

The converse of (a) and (b) do not hold. In fact, with most
permutation groups, the degree bound $\beta(\K[X]^G,\star)$ is much larger
than $\beta(\K[X]^G)$. We conjecture that the converse of (c) and (d)
hold.  However (d) does not hold anymore in a slightly larger setting
which includes the $r$-quasi-symmetric polynomials of
F.~Hivert~\cite{Hivert.RQSym.2004}, a counter example being
$\qsym^2(X_3)$ (there is an obstruction in the fine Hilbert series).

We now generalize the quadratic degree bound of Garsia and Stanton to
permutation groupoids.

\begin{theorem}\label{theorem.finiteGenerationPermutationGroupoid}
  Let $G$ be a permutation groupoid acting on $X$ and $n=|X|$. Then,
  the invariant ring $\K[X]^G$ is a finitely generated algebra and
  $\sym(X)$-module, in degree at most $\frac{|X|(|X|+1)}2$. This
  degree bound is tight.
\end{theorem}
Note that, as usual, when $G$ does not act transitively on the
variables, the degree bound can be greatly improved by considering the
symmetric polynomials on each transitive component instead.
\begin{proof}
  The set of orbit sums $o(x_{S_1} \cdots x_{S_k})$, where
  $S_1\subsetneq \cdots \subsetneq S_k$ is a chain, generates
  $(\K[X]^G,\star)$ as a $(\sym,\star)$-module. This transfers back to
  $\K[X]^G$ and $\sym$.
  
  Note that we may need to consider chains with $S_k=X$; hence the
  degree bound of $\frac{|X|(|X|+1)}{2}$ instead of $\binom{|X|}2$ for
  permutation groups. For an example where the bound is achieved,
  consider the group $G$ made of the identity together with all the
  local bijections of $X=\{1,\dots,n\}$ whose domain is of size at
  most $|X|-1$; then, $\K[X]^G$ is freely generated as a $\sym$-module
  by $1$ and the \emph{staircase} monomials $x_1^{d_1}\cdots
  x_n^{d_n}$ with $1 \leq d_i \leq i$.
\end{proof}

\subsection{The Cohen-Macaulay property}

Invariant rings of permutation groups are always Cohen-Macaulay, and
in fact free $\sym(X)$-modules. The key ingredients are that $\K[X]$
is a free $\sym(X)$-module and the Reynolds operator a
$\sym(X)$-module morphism. A more involved result is that, for all
$n$, $\qsym(X_n)$ is also a free
$\sym(X_n)$-module~\cite{Garsia_Wallach.2003}.

As Example~A.19 of~\cite{Pouzet_Thiery.AgeAlgebra1}
shows, this property does not
hold for all permutation groupoids $G$. Still, $\K[X]^G$ and
$(\K[X]^G,\star)$ being finitely generated over $\sym(X)$, they are
Cohen-Macaulay if and only if they are free $\sym(X)$-modules.

\begin{problem}\label{problem.CM}
  Characterize the permutation groupoids $G$ whose invariant ring
  $\K[X]^G$ (or $(\K[X]^G,\star)$) is Cohen-Macaulay.
\end{problem}

In practice, a first test is to compute the (fine) degrees of
tentative free generators of $\K[X]^G$ by dividing the (fine) Hilbert
series of $\K[X]^G$ by that of $\sym(X)$.

The following theorem is a straightforward extension of Theorem~6.1
of~\cite{Garsia_Stanton.1984} applied to the quotient of the Stanley
Reiner ring of the boolean lattice by a permutation group.
\begin{theorem}
  $(\K[X]^G,\star)$ is a free $\sym(X)$-module if and only if the
  incidence matrix between generators and orbits of maximal chains is
  invertible. In particular, for a set $F$ of finely homogeneous
  invariants whose fine degrees are given by the Hilbert series of
  $\K[X]^G$, the three following conditions are equivalent: $F$ spans
  $\K[X]^G$ as a $\sym(X)$-module, $F$ is a free $\sym(X)$-family, and
  $F$ is a $\sym(X)$-basis of $\K[X]^G$.
\end{theorem}
This immediately gives us a necessary condition on the number of
generators.
\begin{corollary}
  If $(K[X]^G,\star)$ is a free $\sym(X)$-module, then it is of rank
  $\frac{|X|!}{|G(X,X)|}$, where $G(X,X)$ is the underlying
  permutation group of $G$.
\end{corollary}
As a consequence, we recover that the ring $\qsym(X)$ of quasi
symmetric polynomials in $X$ has to be of rank $|X|!$ over $\sym(X)$.

\subsection{SAGBI bases}

SAGBI bases (Subalgebra Analog of a Gröbner Bases for Ideals) were
introduced in~\cite{Kapur_Madlener.1989,Robbiano_Sweedler.1990} to
develop an elimination theory in subalgebras of polynomial rings.
Unlike Gröbner bases, not all subalgebras have a finite SAGBI basis,
and it remains a long open problem to characterize those subalgebras
which have a one.
The following theorem states that, as in the case of permutation
groups, invariant rings of permutation groupoids seldom have finite
SAGBI bases. The proof follows the short proof given by the second
author in~\cite{Thiery_Thomasse.SAGBI.2002} for permutation groups,
with some adaptations.  For example $\qsym(X_n)$, represented as a
subring of $\K[X]$, has no finite SAGBI basis whenever $n>1$.  In
particular, $\qsym(X_2)$ becomes the smallest example of finitely
generated algebra which has no finite SAGBI basis (the standard
example being the invariant ring of the alternating group $A_3$).
Still, SAGBI bases and SAGBI-Gröbner bases provide a useful device in
the computational study of invariant rings of permutation
groups~\cite{Thiery.CMGS.2001}, and should play the same role
with permutation groupoids.

\begin{theorem}\label{theorem.SAGBI}
  Let $G$ be a permutation groupoid, and $<$ be any admissible term
  order on $\K[X]$. Then, the invariant ring $\K[X]^G$ has a finite
  SAGBI basis w.r.t. $<$ if, and only if, $G$ is induced by a permutation
  group generated by reflections (that is transpositions).
\end{theorem}
The following proof is a close variant on the short proof given by the
second author in~\cite{Thiery_Thomasse.SAGBI.2002} in the special case
of permutation groups. For the sake of readability and completeness,
we include it in full here. The key fact is that a submonoid $M$ of
$\N^n$ is finitely generated if, and only if, the convex cone
$C:=\R_+M$ it spans in $\R_+^n$ is finitely generated (that is $C$ is
a \emph{polyhedral cone}). For details, see for
example~\cite[Corollary~2.10]{Bruns_Gubeladze.PolytopesRingsKTheory.2009}. In
particular $C$ must be the intersection of finitely many half spaces,
and thus closed for the euclidean topology.
\begin{proof}
  The if-part is easy, a finite SAGBI basis being given by the
  elementary symmetric polynomials in the variables in each
  $G$-transitive components.
  
  Without loss of generality, we may assume $X=\{1,\dots,n\}$ with
  $x_1>\dots>x_n$. Let $M$ be the monoid of initial monomials in
  $\K[X]^G$, seen as a submonoid of $\N^n$, and $C:=\R_+M$ be the
  convex cone it spans in $\R_+^n$.
  
  At this stage, we cannot give an explicit description of $C$, but we
  can construct a convex cone $C'$ which approximates it closely
  enough for our purposes.  By the standard characterization of
  admissible term orders on $\K[X]$, there exists a family of $n$
  linear forms $l=(l_1,\dots,l_n)$ such that $x^d > x^{d'}$ if and
  only if $l(d) >_\lex l(d')$, where we denote by $l(d)$ the $n$-uple
  $l_1(d_1,\dots,d_n),\dots, l_n(d_1,\dots,d_n)$. Given two vectors
  $v$ and $v'$ in $\R_+^n$, we write $v>v'$ if $l(v)>_\lex l(v')$.
  The partial action of $G$ on monomials extends naturally to a
  partial action on $\R_+^n$: whenever the support of
  $v=(v_1,\dots,v_n)$ in contained in the domain of a local bijection
  $f\in G$, $f.v$ is the vector obtained by permuting the non zero
  entries of $v$ according to $f$.  Let $C'$ be the subset of all
  vectors $v$ of $\R_+^n$ such that $v > f.v$ for all $f.v$ in the
  $G$-orbit of $v$. In fact, $C'$ is a convex cone with non empty
  interior (it contains the $n$ linearly independent vectors
  $(1,0,\dots,0), (1,1,0,\dots,0), \dots, (1,\dots,1)$). By
  construction, $M$ consists of the points of $C'$ with integer
  coordinates. It follows that $C\subseteq C'\subseteq \overline C$, where
  $\overline C$ is the topological closure of $C$.
  
  Assume now that $M$ is finitely generated. Then, $C$ is a closed
  convex cone, and $C$ and $C'$ simply coincide.
  
  Assume further that $G$ is not generated by transpositions. Then,
  there exists $a<b$ such that the transposition $(a,b)$ is not in
  $G$, while $a$ is in the $G$-orbit of $b$. Choose such a pair $a<b$
  with $b$ minimal. We claim that there is no transposition $(a',b)$
  in $G$ with $a'<b$.  Otherwise, $a$ and $a'$ are in the same
  $G$-orbit, and by minimality of $b$, $(a,a')\in G$; thus,
  $(a,b)=(a,a')(a',b)(a,a')\in G$. Pick $g\in G$ such that $g.b=a$,
  and for $t\geq 0$, define the vector in $\R_+^n$:
  \begin{displaymath}
    u_t :=
    \left(nt,\ (n-1)t,\ \dots,\ (n-b+2)t,\ n-b+1,\ (n-b)t,
      \ \dots,\ t,\ 1\right)\ .
  \end{displaymath}
  Note that $u_1=(n,\dots,1)$ is in $C$, whereas
  $u_0=(0,\dots,0,n-b+1,0,\dots0)$ is not in $C$ because $g.u_0>u_0$.
  
  Take $t$ such that $0<t\leq1$. Then, the vector $u_t$ has no zero
  coefficients, and in particular its $G$-orbit coincides with its
  orbit w.r.t. the underlying permutation group $G(X,X)$.
  Furthermore, the entries of $u_t$ are all distinct, except when
  $t=\frac{n-b}{n-a'}$ for some $a'<b$, in which case the $a'$-th and
  $b$-th entries are equal. Since $(a',b)\not\in G$, the orbit of
  $u_t$ is of size $|G(X,X)|$, and there exists a unique permutation
  $f_t\in G(X,X)$ such that $f_t.u_t$ is in $C$.
  
  Let $t_0=\inf\{t\geq 0, u_t \in C\}$. If $u_{t_0}\not\in C$, then
  $u_{t_0}$ is in the closure of $C$, but not in $C$, a contradiction.
  Otherwise, $u_{t_0}\in C$, and $t_0>0$ because $u_0\not\in C$.  For
  any permutation $f$, $\{f.u_t, t\geq 0\}$ is a half-line; so, $C$
  being convex and closed, $I_f:=\{t, f.u_t\in C\}$ is a closed
  interval $[x_f, y_f]$. For example, $I_\id=[t_0,1]\subsetneq [0,1]$.
  Since the interval $[0,1]$ is the union of all the $I_f$, there
  exists $f\ne\id$ such that $t_0\in I_f$. This contradicts the
  uniqueness of $f_{t_0}$.
\end{proof}

\subsection{Stability by derivation}

We denote by $\partial_i$ the derivative w.r.t. the variable $x_i$,
and consider the derivation $D := \sum_{i\in X} \partial_i$ on $\K[X]$.
\begin{proposition}\label{proposition.groupoid.derivation}
  Let $G$ be a permutation groupoid. Then, $\K[X]^G$ is stable by the
  derivation $D$ if and only if $G$ is induced by a permutation group.
  On the other hand, $\K[X]^G$ is always stable w.r.t. the action of
  the \emph{rational Steenrod operators} $S_k := \sum_i x_i^{k+1}
  \partial_i$ for $k\geq 0$ (see~\cite{Hivert_Thiery.SA.2002} for
  details on the rational Steenrod operators).
\end{proposition}
\begin{proof}
  The if-part is trivial, since $D$ commutes with the action of the
  symmetric group $\sg_X$ on $\K[X]$. Similarly, the rational Steenrod
  operators always stabilize $\K[X]^G$ because they commute with the
  action of any local bijection on $\K[X]^G$.
  
  Assume now that $\K[X]^G$ is stable by derivation. Let $f:A\mapsto
  B$ be a local bijection such that $A\subsetneq X$, and take $i$ in
  $X\backslash A$. We just need to prove that $f$ extends to a local
  bijection $g$ in $G$ with domain $A\cup \{i\}$.  Applying induction,
  any local bijection in $G$ will then extend to a permutation, as
  desired.
  
  Take a monomial $m$ whose support is $A$ and whose exponents are all
  distinct and at least $2$, and consider the derivation
  $p=D(o(mx_i))$ of the orbitsum of the monomial $mx_i$ in $\K[X]^G$.
  The monomial $m$ occurs in $p$; hence, by invariance of $p$, $f(m)$
  also occurs in $p$, as the derivative of some monomial $g(m x_i)$ in
  the orbit of $mx_i$. By the choice of the exponents of $m$, $f$ and
  $g$ must coincide on $A$, while at the same time $i$ belongs to the
  domain of $g$.
\end{proof}
\begin{example}
  $\qsym(X_2)$ has no graded derivation of degree $-1$.
\end{example}

\section{Hopf algebra structure}
\label{section.hopf}

Let $R$ be a relational structure on a set $E$. In this section, we
propose one approach to try to endow its age algebra with a Hopf
algebra structure by looking at copies of $R$ within $R$.

Assume that there exists two disjoint subsets $E_1$ and $E_2$ of $E$
such that:
\begin{itemize}
\item[(a)] $R$ restricted to $E_i, i=1,2$ is isomorphic to $R$, or
  at least has the same age as $R$.
\item[(b)] The isomorphism type of a set $A\subseteq E_1\cup E_2$ is
  entirely determined by the isomorphism types of $A\cap E_1$ and
  $A\cap E_2$.
\end{itemize}
Define the following graded algebra morphism on the set algebra:
\begin{equation}
  \Delta_{E}^{E_1,E_2}:
  \begin{cases}
    \setalgebra[E] &\twoheadrightarrow \setalgebra[E_1] \otimes \setalgebra[E_2]\\
    A &\mapsto
    \begin{cases}
      A\cap E_1 \otimes A\cap E_2 & \text{if $A\subseteq E_1\cup E_2$,}\\
      0                           & \text{otherwise.}
    \end{cases}
  \end{cases}
\end{equation}

\begin{lemma}
  \label{lemma.hopf}
  $\Delta_E^{E_1,E_2}$ induces a coproduct $\Delta$ on
  $\agealgebra(R)$, that is a graded algebra morphism from
  $\agealgebra(R)$ to $\agealgebra(R)\otimes \agealgebra(R)$.
\end{lemma}

\begin{proof}
  All we have to check is that $\Delta_E^{E_1,E_2}$ indeed maps
  $\agealgebra(E)$ to $\agealgebra(E_1)\otimes \agealgebra(E_2)$.
  Things are easier in the graded dual which is the quotient of the
  set algebra by the isomorphism equivalence relation. There, we need
  to check that the dual product is well defined; that is,
  given two types $\tau_1\in \age(E_1)$ and $\tau_2\in \age(E_2)$,
  the type of the product $A_1\cup A_2$ of two
  representatives $A_1$ and $A_2$ of the types $\tau_1$ and $\tau_2$
  respectively shall be independent of that choice of
  representatives. This is exactly condition (b). By
  condition (a), $\Delta$ can then be interpreted as going from
  $\agealgebra(R)$ to $\agealgebra(R)\otimes\agealgebra(R)$ and is, by
  construction, an algebra morphism.
\end{proof}

Let in addition $\phi_1,\phi_2$ be isomorphisms from $E$ to $E_1, E_2$
respectively, and denote by $E_{i,j}:=\phi_i(E_j)$ the induced copy of $E_j$ inside
$E_i$. Consider the induced bijection
\begin{displaymath}
  \Phi:
  \begin{cases}
    E_{1,1}\times E_{1,2} \times E_2 &\hookrightarrow\!\!\!\!\!\rightarrow E_1\times E_{2,1} \times E_{2,2}\\
    (a,b,c) &\mapsto (\phi_1^{-1}(a), \phi_1\circ\phi_2\circ\phi_1^{-1}\circ\phi_2^{-1}(b),\phi_2(c))
  \end{cases}\,,
\end{displaymath}
and assume further
\begin{itemize}
\item[(c)] $\Phi$ is an isomorphism: namely, if $A,B,C$ are three
  subsets of $E_{1,1}, E_{1,2},E_2$ respectively and
  $A',B',C'=\Phi(A,B,C)$, then $A\cup B\cup C\subset E$ and
  $A'\cup B'\cup C'\subset E$ have the same type in $\age(R)$.
\end{itemize}

\begin{proposition}
  \label{proposition.hopf}
  Let $R$, $E_1$, $E_2$, $\Delta$, $\phi_1$, $\phi_2$, as above
  satisfying (a) and (b) and (c).
  Then the coproduct is coassociative, turning $\agealgebra(R)$ into a
  freely generated graded connected commutative Hopf algebra.  In
  particular, its Hilbert series is of the form:
  \begin{equation}
    \hilbert_R(Z) = \prod_i \frac1{(1-Z^{d_i})}\,,
  \end{equation}
  where the product may be infinite.
\end{proposition}
\begin{proof}
  Working in the graded dual as in the proof of
  Lemma~\ref{lemma.hopf}, we need to check the dual product is
  associative. Take $A,B,C$ representatives of three orbits; without
  loss of generality they may be chosen in $E_{1,1}, E_{1,2}, E_2$
  respectively. Take $A',B',C'=\Phi(A,B,C)$, and note that $A',B',C'$
  are representatives of the same orbit. Condition (c) guarantees
  exactly that the product $(A \cup B) \cup C$ and
  $A'\cup (B'\cup C')$ are in the same orbit, as desired.

  Since $\agealgebra(R)$ is graded connected, most axioms of Hopf
  algebras (in particular concerning the antipode) are satisfied for
  free. The age algebra is then a graded commutative Hopf algebra.
  Using the classical Milnor-Moore theorem, it must be a free
  commutative algebra. The form of the Hilbert series follows.
\end{proof}

\appendix
\section{More examples of relational structures and age algebras}

\subsection{Examples with or without a Hopf age algebra structure}

\begin{example}
  Take the set $(-1,1)$ equipped with either its natural antichain or
  chain structure, and some relational structure $S$ on some set $F$.
  Construct the relational structure $R:=\Q\wr S$ on $E:=\Q\times F$
  by substituting each rational number by a copy of $F$. Define
  $E_1=(-1,0)\times F$ and $E_2=(0,1) \times F$. Consider the affine
  isomorphisms $\phi_1$ and $\phi_2$ mapping $E$ on $E_1$ and $E_2$
  respectively, so that, e.g., $E_{2,1}=(0,\frac12)$.

  Then, the axioms (a), (b), (c) of Proposition~\ref{proposition.hopf}
  are satisfied, and $\agealgebra(R)$ is a Hopf algebra.
  When $S$ is an infinite chain and $\Q$ is equipped respectively with
  its natural antichain or chain structure, one get respectively the
  Hopf algebras of symmetric and quasi-symmetric functions on the
  monomial basis.

  Example~A.9 of~\cite{Pouzet_Thiery.AgeAlgebra1} is obtained by
  taking a finite graph $G$ for $S$ and the antichain on $\Q$. We
  recover the free algebra generated by connected finite restrictions
  of $S$. This generalizes immediately for any relational structure
  $S$.
\end{example}

In the examples above, the construction mimics the standard trick of
doubling the alphabet to construct Hopf algebras (see
e.g.~\cite[Section, 3.2]{Duchamp_Hivert_Thibon.2002}
or~\cite{Hivert.2005,Priez.2013}) which is our original inspiration. The
existence of a coassociative coproduct is a very strong constraint,
and it is not clear whether there exist interesting coassociative
examples where the construction really goes beyond this trick.

\begin{example}
  Let $X:={x_1,\dots,x_n}$, and consider the polynomial rings $\K[X]$
  realized as invariant ring of the trivial permutation groupoid
  $\K[X]=\K[X]^\id$. One can take as relational structure
  $R:=X\times \Q$ where each piece $\{x_i\}\times \Q$ is colored
  differently by a unary predicate. Taking $E_1:=X\times (-1,0)$ and
  $E_2:=X\times (0,1)$, Proposition~\ref{proposition.hopf} endows
  $\K[X]$ with its usual Hopf algebra structure where the generators
  $x_i$ are primitive.
\end{example}

\begin{example}
  The age algebra of a direct sum $R:=K_\omega\oplus\cdots\oplus
  K_\omega$ of $k$ infinite cliques is the algebra of symmetric
  polynomials on $k$ variables (see~\cite[Example
  A.2]{Pouzet_Thiery.AgeAlgebra1}). Since it is a free algebra, it can
  be endowed with a Hopf algebra structure (for example by making its
  generators group-like). Yet, we have not found a way to achieve this
  using Proposition~\ref{proposition.hopf} on this particular
  relational structure $R$.
\end{example}

\begin{example}[The Planar Shuffle Algebra]
  \newcommand{\ta}{((\leaf,\leaf),\leaf)}
  \newcommand{\tb}{(\leaf,(\leaf,\leaf))}
  In~\cite[Section A.4]{Pouzet_Thiery.AgeAlgebra1}, we realized the
  Planar Shuffle Algebra of
  Gerritzen~\cite{Drensky_Gerritzen.2004,Gerritzen.2.2004,Gerritzen.2004}
  as an age algebra.

  Consider the infinite tree $T$ depicted in Figure 1 of~\cite[Section
  A.4]{Pouzet_Thiery.AgeAlgebra1}. Recall that the relational
  structure consists of the infix total order and three ternary
  relations on the set $E$ of leaves of $T$.
  Choose two non-leaf children $x_1$ and $x_2$ of the root of $T$, and
  define $E_1$ and $E_2$ respectively as the leaves of the subtrees
  $T_1$ and $T_2$ of $T$ dandling from $x_1$ and $x_2$ respectively.
  Choose isomorphisms $\phi_1$ and $\phi_2$ from $E$ to $E_1$ and
  $E_2$ respectively. Define $E_{i,j}$ accordingly.

  Then, conditions (a) and (b) of
  Proposition~\ref{proposition.hopf} are satisfied, but not condition
  (c). Take indeed $a,b,c$ in $E_{1,1},E_{1,2},E_{2,2}$ respectively,
  and define $a',b',c':=\Phi(a,b,c)$ in $E_1, E_{2,1},E_{2,2}$
  respectively. The set $\{a,b,c\}$ has type $\ta$
  whereas the set $\{a',b',c'\}$ has type $\tb$.
  We recover the non-coassociative coproduct of the Planar Shuffle
  Algebra which splits the children of the root nodes in two
  consecutive ranges in all possible ways, and reduces the two
  resulting trees. For example:
  \begin{align*}
    \Delta(\leaf) & = \leaf \otimes 1 + 1 \otimes \leaf\\
    \Delta((\leaf,\leaf)) &= (\leaf,\leaf) \otimes 1 + \leaf \otimes \leaf + 1 \otimes (\leaf,\leaf)\\
    \Delta(\ta) & = \ta \otimes 1 + (\leaf,\leaf) \otimes \leaf + 1 \otimes \ta\,.
  \end{align*}
  Iterating the above, $\leaf\otimes\leaf\otimes\leaf$ appears in
  $(\Delta\otimes \id)(\Delta(\ta))$ but not in
  $(\id\otimes\Delta)(\Delta(\ta))$; that's the dual of the
  aforementioned counter example to (c).

  As far as we know, there currently is no known coassociative
  coproduct for this algebra, though it's likely to exist.
\end{example}

\subsection{Examples with a finite monomorphic decomposition}

\begin{example}
  \label{example.notfinitelygenerated2}%
  This example features another age algebra that is not finitely generated.

  Consider the relational structure $R:=(E,\rho)$, where
  $E:=\N \times \{1,2,3\}$ is endowed with the ternary relation
  $\rho:=\{ ( (i,1), (j,2), (k,3) ),\ i,j,k\in \N \}$. The minimal
  monomorphic decomposition is given by
  $(E_i:=\N\times \{i\})_{i\in X:=\{1,2,3\}}$. A basis of the age
  algebra $\agealgebra=\agealgebra(R)$ in degree $d$ is given by
  \begin{displaymath}
    x^\dvect, \quad \text{for $d_i>0,d_1+d_2+d_3=d$}
  \end{displaymath}
  together with
  \begin{displaymath}
    \sum_{\dvect\suchthat\text{$d_1=0$ or $d_2=0$ or $d_3=0$, $d_1+d_2+d_3=d$}} x^\dvect.
  \end{displaymath}
  The profile is given by $\phi_R(d)=\binom{d-1}2+1$ and the Hilbert
  series is $(\frac{x}{1-x})^3+\frac{1}{1-x}=\frac{x^3+x^2-2x+1}{(1-x)^3}$.

  By construction, the restriction of $E$ on $E_1\cup E_2$ is
  monomorphic. Therefore, the minimal monomorphic decomposition is not
  hereditary minimal and the age algebra is not finitely generated.

\end{example}

\begin{example}
  A variation of Example~A.19 of~\cite{Pouzet_Thiery.AgeAlgebra1}
  featuring a finitely generated age algebra with a Hilbert series
  whose numerator cannot be chosen with non-negative coefficients.
  This one has only two monomorphic parts which are both infinite.

  Let $R:=(E, \rho )$, where $E:=\N \times \{0,1\}$, $\rho:=[\N \times
  \{0\}]^3\cup [\N \times \{1\}]^3$. Then $R$ has two monomorphic
  parts, namely $\N \times \{0\}$ and $\N \times \{1\}$. Each
  type of $n$-element restriction has a representative made of a $m+k$
  element subset of $\N \times \{0\}$ and of a $m$-element subset of
  $\N \times \{1\}$ such that $n=2m+ k$; these representatives are
  non-isomorphic, except if $n=2$ (in the later case, all $2$ -element
  restrictions are isomorphic, hence we may eliminate the
  representative corresponding to $m=1, k=0$). With this observation,
  a straightforward computation shows that $\profile_R(0)=
  \profile_R(1)=\profile_R(2)=1$ and $\profile_R(n)=\lfloor \frac{n}{2}
  \rfloor +1$ for $n\geq 3$. Hence the generating series
  $H_{\profile_R}=\frac {1}{(1-Z)(1-Z^2)}- Z^2=\frac
  {1-Z^2+Z^3+Z^4-Z^5}{(1-Z)(1-Z^2)}$.

  But, then $H_{\profile_R}$ cannot be written as a quotient of the
  form $\frac{P}{(1-Z)(1-Z^k)}$ where $P$ is a polynomial with
  non-negative integer coefficients. Suppose indeed that
  $H_{\profile_R}$ is of this form. We may assume $k$ even
  (otherwise, multiply $P$ and $(1-Z)(1-Z^k)$ by $(1+Z^k$).  Set
  $k':=\frac{k}{2}$. Multiplying $1-Z^2+Z^3+Z^4-Z^5$ and
  $(1-Z)(1-Z^2)$ by $1+ Z^2 + \cdots+ Z^{2(k'-1)}$, we get
  $P=(1-Z^2+Z^3+Z^4-Z^5)(1+ Z^2 + \cdots+ Z^{2(k'-1)})$. Hence, the
  term of largest degree has a negative coefficient, a contradiction.
\end{example}

\begin{example}
  Another variation on Example~A.19 of~\cite{Pouzet_Thiery.AgeAlgebra1},
  with four variables; now the numerator can take either positive or
  negative coefficients.

  Let $R:=(E, (\rho, U_2,U_3) )$, where $E:=\N \times \{0,1,2,3\}$,
  $\rho:=\{((n,i),(m,j)): i=0, j\in \{1,2\}$ or $i=1,j=3\}$;
  $U_i:=\N\times \{i\}$ for $i\in \{2,3\}$. Then $R$ has four
  monomorphic components, namely $\N\times \{0\},\N \times \{1\},\N
  \times \{2\}, \N \times \{3\}$.  Let $S$ be the induced structure on
  four elements of the form $(x_i,i), i\in \{0,1,2,3\}$. A crucial
  property is that $S$ has only two non-trivial local isomorphisms,
  namely the map sending $(x_0,0)$ onto $(x_1,1)$ and its inverse.
  From this follows that the induced substructures on two $n$-element
  subsets $E$ are isomorphic if either they have the same number of
  elements on each $\N\times \{i\}$ or one subset is included into $\N
  \times \{0\}$, the other into $\N\times \{1\}$. Hence, the
  generating series $\hilbert_{\profile_R}$ is
  $\frac{1}{(1-Z)^4}-\frac{Z}{1-Z}= \frac
  {1-Z+3Z^2-3Z^3+Z^4}{(1-Z)^4}$. We may write it
  $\hilbert_{\profile_R}=\frac {Q_1}{(1-Z)(1-Z^4)(1-Z^5)(1-Z^5)}$
  where
  $Q_1:=1+2Z+6Z^2+10Z^3+14Z^4+17Z^5+18Z^6+14Z^7+10Z^8+6Z^9+Z^{10}$, as
  well as $\hilbert_{\profile_R}=\frac {Q}{(1-Z)(1-Z^5)^3}$ where
  $Q_2:=1+2Z+6Z^2+10Z^3+15Z^4+18Z^5+22Z^6+18Z^7+15Z^8+10Z^9+6Z^{10}+
  Z^{12}+Z^{16}$.
\end{example}

\subsection{Example with polynomial growth but infinitely many
  monomorphic parts}
\begin{examples}
  \label{example.notFinitelyGeneratedInfiniteDecomposition}
  Consider the direct sum
  $R':= \overline K_\infty \wr K_{1,1}\oplus R$ of the infinite
  matching and the relational structure $R$ of
  Example~\ref{example.notfinitelygenerated}. The profile has
  polynomial growth, but $R$ has infinitely many monomorphic parts and
  the age algebra is not finitely generated.
\end{examples}

\subsection{Miscellaneous example}
\begin{example}
  \label{example.infinite_profile}
  This example illustrates why some care needs to be taken when
  defining the age algebra for a relational structure with non finite
  profile.

  Take an infinite set $E$, with one binary relation for each couple
  $(i,j)$ of distinct elements of $E$, which holds just on $(i,j)$. In
  degree $1$, there is a single type; let $e_1$ be the corresponding
  element in the age algebra. Then, $e_1^2$ shall be the sum of all the
  types of degree $2$, of which there are infinitely many. Hence,
  for the age algebra to be indeed stable by multiplication, one shall
  consider infinite linear combinations of types.
\end{example}

\nocite{hodkinson_macpherson.1988}

\bibliographystyle{alpha}
\bibliography{main2}

\end{document}